\numberwithin{equation}{section}
\newtheorem{theorem}{Theorem}[section]
\newtheorem{definition}{Definition}[section]
\newtheorem{proposition}[theorem]{Proposition}
\newtheorem{lemma}{Lemma}[section]
\newtheorem{corollary}[theorem]{Corollary}
\newtheorem{question}{Question}
 \def\@biblabel#1{#1.}
\newcommand{\A}{{\cal A}}
\newcommand{\uu}{{\bf u}}
\newcommand{\x}{{\bf x}}
\newcommand{\y}{{\bf y}}
\newcommand{\z}{{\bf z}}
\newcommand{\q}{{\bf q}}
\newcommand{\e}{{\bf e}}
\newcommand{\0}{{\bf 0}}
\begin{document}
\title{Strictly semi-positive tensors and the boundedness of tensor complementarity problems}

\author{Yisheng Song\thanks{ Corresponding author. School of Mathematics and Information Science  and Henan Engineering Laboratory for Big Data Statistical Analysis and Optimal Control,
 Henan Normal University, XinXiang HeNan,  P.R. China, 453007.
Email: songyisheng@htu.cn. This author's work was supported by
the National Natural Science Foundation of P.R. China (Grant No.
11571095,11601134).  His work was partially done when he was
visiting The Hong Kong Polytechnic University.}, \quad Liqun Qi\thanks{ Department of Applied Mathematics, The Hong Kong Polytechnic University, Hung Hom,
Kowloon, Hong Kong. Email: maqilq@polyu.edu.hk. This author's work was supported by the Hong
Kong Research Grant Council (Grant No. PolyU 502111, 501212, 501913 and 15302114).}}

\date{\today}

 \maketitle

\begin{abstract}
\noindent  
In this paper, we present the boundedness of solution set of tensor complementarity problem defined by a strictly semi-positive tensor. For strictly semi-positive tensor, we prove that all $H^+(Z^+)$-eigenvalues of each principal sub-tensor are positive. We define two new constants associated with $H^+(Z^+)$-eigenvalues of a strictly semi-positive tensor.  With the help of these two constants, we establish upper bounds of an important quantity  whose positivity is a necessary and sufficient condition for a general tensor to be a strictly semi-positive tensor.  The monotonicity and boundedness of such a quantity are established too.

\noindent {\bf Key words:}\hspace{2mm} Strictly semi-positive tensor,  Tensor complementarity problem, upper and lower bounds,  Eigenvalues.  \vspace{3mm}

\noindent {\bf AMS subject classifications (2010):}\hspace{2mm}
47H15, 47H12, 34B10, 47A52, 47J10, 47H09, 15A48, 47H07.
  \vspace{3mm}

\end{abstract}
\section{Introduction}
\label{intro}
An $m$-order $n$-dimensional tensor (hypermatrix) $\mathcal{A} = (a_{i_1\cdots i_m})$ is a multi-array of real entries $a_{i_1\cdots
	i_m}$, where $i_j \in I_n=\{1,2,\cdots,n\}$ for $j \in I_m=\{1,2,\cdots,m\}$. Let $\x \in\mathbb{R}^n$. Then $\mathcal{A} \x^{m-1}$ is a vector in $\mathbb{R}^n$ with its $i$th component as
$$\left(\mathcal{A} \x^{m-1}\right)_i: = \sum_{i_2, \cdots, i_m=1}^n a_{ii_2\cdots
	i_m}x_{i_2}\cdots x_{i_m}$$ for $i \in I_n$.
Obviously, each component of $\mathcal{A} \x^{m-1}$ is a homogeneous polynomial of degree $m-1$. For any $\q\in\mathbb{R}^n$, we consider the tensor complementarity problem, a special class of nonlinear complementarity problems, denoted by TCP$(\mathcal{A},\q)$:  finding $\x \in \mathbb{R}^n$ such that 	$$ \x \geq \0, \q + \mathcal{A}\x^{m-1} \geq \0, \mbox{ and }\x^\top (\q + \mathcal{A}\x^{m-1}) = 0\leqno{\bf TCP(\mathcal{A},\q)}$$
or showing that no such vector exists. \\

Clearly, TCP$(\mathcal{A},\q)$ is a natural extension of linear complementarity problem ($m=2$). The notion of the tensor complementarity problem was used firstly by Song and Qi \cite{SQ-2015,SQ2015}.  Recently, Huang and Qi \cite{HQ} formulated an $n-$person noncooperative game as a tensor complementarity problem and showed that finding a
Nash equilibrium point of the multilinear game is equivalent to finding a solution of the tensor complementarity problem.    Very recently, the solution of TCP$(\mathcal{A},\q)$ and related problems have been well studied. Song and Qi \cite{SQ15} discussed the solution of TCP$(\mathcal{A},\q)$ with a strictly semi-positive tensor and proved the equivalence between  (strictly) semi-positive tensors and (strictly) copositive tensors in the case of symmetry.  Che, Qi, Wei \cite{CQW} discussed the existence and uniqueness of solution of TCP$(\mathcal{A},\q)$ with some special tensors. Song and Yu \cite{SY15} studied properties of the solution set of  the TCP$(\mathcal{A},\q)$ and obtained global upper bounds of the solution of  the TCP$(\mathcal{A},\q)$ with a strictly semi-positive tensor.  Luo, Qi and Xiu \cite{LQX} obtained the sparsest solutions to TCP$(\mathcal{A},\q)$ with a Z-tensor. Gowda, Luo, Qi and Xiu \cite{GLQX} studied the various
equivalent conditions of existence of solution to TCP$(\mathcal{A},\q)$ with a Z-tensor. Ding, Luo and Qi \cite{DLQ} showed the properties of TCP$(\mathcal{A},\q)$ with a P-tensor. Bai, Huang and Wang \cite{BHW} considered the global uniqueness and solvability for TCP$(\mathcal{A},\q)$ with a strong P-tensor.  Wang, Huang and Bai \cite{WHB} gave the solvability of TCP$(\mathcal{A},\q)$ with exceptionally regular tensors. Huang,Suo and Wang \cite{HSW} presented the several classes of  Q-tensors.
Song and Qi \cite{SQ13}, Ling, He, Qi \cite{LHQ2015,LHQ2016}, Chen, Yang, Ye \cite{CYY} studied the the tensor eigenvalue complementarity problem for higher order tensors.\\

The tensor complementarity problem, as a special type of nonlinear complementarity problems, is a new topic emerged from the tensor community, inspired by the growing research on structured tensors. At the same time, the tensor complementarity problem, as a natural extension of the linear complementarity problem seems to have similar  properties to such a problem, and to have its particular and nice properties other than ones of the classical nonlinear complementarity problem. So how to identify their good properties and applications will be very interesting by means of the special structure of  higher order tensors (hypermatrices).\\

Let $A = (a_{ij})$ be an $n \times n $ real matrix and $\q \in \mathbb{R}^n$. The linear complementarity problem, denoted by LCP$(A, \q)$, is to find $ \x \in \mathbb{R}^n$ such that
$$	\x \geq \0, \q + A\x \geq \0, \mbox{ and }\x^\top (\q + A\x) = 0
\leqno{\bf LCP(A,\q)}$$
or to show that no such vector exists. In past several decades, there have been  numerous mathematical workers concerned with the solution of LCP$(A, \q)$ by means of the special structure of the matrix $A$. An important topic of those studies is the error bound analysis for the solution of LCP$(A, \q)$. In 1990, Mathias and Pang \cite{MP90} discussed error bounds for LCP$(A, \q)$ with a P-matrix $A$.  Luo, Mangasarian, Ren, Solodov \cite{LMRS} established  error bounds for LCP$(A, \q)$ with a nondegenerate matrix. Chen and Xiang \cite{CX07,CX06} studied perturbation bounds of LCP$(A, \q)$ with a P-matrix and the computation of those error bounds.  Chen, Li, Wu, Vong \cite{CLWV} established error bounds of LCP$(A, \q)$ with a MB-matrix. Dai \cite{D2011} presented error bounds for LCP$(A, \q)$ with a DB-matrix. Dai, Li, Lu \cite{DLL12,DLL13} obtained   error bounds for LCP$(A, \q)$ with a SB-matrix. Garc\'ia-Esnaola and Pe\~na \cite{GP09} studied error bounds for LCP$(A, \q)$ with a B-matrix. Garc\'ia-Esnaola and Pe\~na \cite{GP12} proved error bounds for LCP$(A, \q)$ with a BS-matrix. Garc\'ia-Esnaola and Pe\~na \cite{GP10} gave the comparison of error bounds for LCP$(A, \q)$ with a H-Matrix. Recently, Li and Zheng \cite{LZ14} gave a new error bound for LCP$(A, \q)$ with a H-Matrix. Sun and Wang \cite{SW13} studied error bounds  for generalized linear complementarity problem under some proper assumptions. Wang and Yuan \cite{WY11} presented componentwise error bounds for LCP$(A, \q)$.\\

Motivated by the study on error bounds for LCP$(A, \q)$, we consider the boundedness of solution set for the tensor complementarity problem.
The following question is natural. May we extend the error bounds results  of the linear complementarity problem to  the tensor  complementarity problem with some class of specially structured tensors?\\

In this paper, we will mainly consider the above question. In order to showing the boundedness of solution set of tensor complementarity problem, we first study the properties of a quantity $\beta(\mathcal{A})$ for a strictly semi-positive tensor. Such  a quantity $\beta(\mathcal{A})$ closely adheres to the error bound analysis of TCP$(\mathcal{A},\q)$.  We show the monotonicity and boundedness of $\beta(\mathcal{A})$ and obtain that the strict positivity of $\beta(\mathcal{A})$ is equivalent to strict semi-positivity of a  tensor. We introduce two new constants associated with $H^+(Z^+)$-eigenvalues of a tensor and establish upper bounds of $\beta(\mathcal{A})$ for  strictly semi-positive tensor  $\mathcal{A}$. We also prove that all $H^+(Z^+)$-eigenvalues of each principal sub-tensor of a strictly semi-positive tensor are positive. Finally,  we present  the upper and lower bounds of solution set for TCP$(\mathcal{A},\q)$ with a strictly semi-positive tensor  $\mathcal{A}$.\\


We briefly describe our notation. Let $I_n:=\{1, 2,  \cdots, n\}.$  We
use small letters $x, u, v, \alpha, \cdots$, for scalars, small bold
letters $\x, \y, \cdots$, for vectors, capital letters $A, B,
\cdots$, for matrices, calligraphic letters $\mathcal{A}, \mathcal{B}, \cdots$, for tensors.   Denote the set of all
real $m$th order $n$-dimensional tensors by $T_{m, n}$ and the set of all
real $m$th order $n$-dimensional symmetric  tensors by $S_{m, n}$.   We
denote the zero tensor in $T_{m, n}$ by $\mathcal{O}$.   Denote $\mathbb{R}^n:=\{\x=(x_1, x_2,\cdots, x_n)^\top; x_i\in \mathbb{R}, i\in I_n\}$,  ${\mathbb{C}}^n:=\{(x_1,x_2,\cdots,x_n)^T;x_i\in {\mathbb{C}},  i \in I_n \}$, $\mathbb{R}^n_{+}=\{\x\in \mathbb{R}^n;\x\geq\0\}$, $\mathbb{R}^n_{-}=\{\x\in \mathbb{R}^n;\x\leq\0\}$ and $\mathbb{R}^n_{++}=\{\x\in \mathbb{R}^n;\x>0\}$, where $\mathbb{R}$ is the set of real numbers, and ${\mathbb{C}}$ is the set of complex numbers, $\x^\top$ is the transposition of a vector $\x$, and $\x\geq\0$ ($\x>\0$) means $x_i\geq0$ ($x_i>0$) for all $i\in I_n$.  Let $\e=(1,1,\cdots,1)^T$. Denote by $\e^{(i)}$ the
$i$th unit vector in $\mathbb{R}^n$, i.e., $e^{(i)}_j=1$ if $i=j$ and  $e^{(i)}_j=0$ if $i\neq j$, for $i,j\in I_n$.  For any vector $\x \in
\mathbb{C}^n$, $\x^{[m-1]}$ is a vector in $\mathbb{C}^n$ with
its $i$th component defined as $x_i^{m-1}$ for $i \in I_n$, and $\x \in
\mathbb{R}^n$, $\x_+$ is a vector in $\mathbb{R}^n$ with $(\x_+)_i=x_i$ if $x_i\geq0$ and $(\x_+)_i=0$ if $x_i<0$  for $i \in I_n$.  We assume that $m \ge 2$ and $n \ge 1$.  We denote by $\mathcal{A}^J_r$ the principal sub-tensor of a tensor $\mathcal{A} \in T_{m, n}$ such that the entries of  $\mathcal{A}^J_r$ are indexed by $J \subset I_n$ with $|J|=r$ ($1 \le r\leq n$), and denote by $\x_J$ the $r$-dimensional sub-vector of a vector $\x \in \mathbb{R}^n$, with the components of $\x_J$ indexed by $J$.

\section{Preliminaries and basic facts}

In this section, we will  collect some basic definitions and facts, which will be used later on.

All the tensors discussed in this paper are real. An $m$-order $n$-dimensional tensor (hypermatrix) $\mathcal{A} = (a_{i_1\cdots i_m})$ is a multi-array of real entries $a_{i_1\cdots
	i_m}$, where $i_j \in I_n$ for $j \in I_m$. If the entries $a_{i_1\cdots i_m}$ are
invariant under any permutation of their indices, then $\mathcal{A}$ is
called a {\bf symmetric tensor}. We now give the
definitions of  (strictly) semi-positive tensors (strictly) copositive tensors, which was introduced by Song and Qi \cite{SQ2015}.
The concept of (strictly) copositive tensors was first introduced and used by Qi in \cite{Qi1}.  Their equivalent definition and some special structures were proved by Song and Qi \cite{SQ-15}.
\begin{definition} \label{d21} 
	Let $\mathcal{A}  = (a_{i_1\cdots i_m}) \in T_{m, n}$.   $\mathcal{A}$ is said to be\begin{itemize}
		\item[(i)] {\bf semi-positive} iff for each $\x\geq0$ and $\x\ne\0$, there exists an index $k\in I_n$ such that $$x_k>0\mbox{ and }\left(\mathcal{A} \x^{m-1}\right)_k\geq0;$$
		\item[(ii)]  {\bf strictly semi-positive} iff for each $\x\geq\0$ and $\x\ne\0$, there exists an index $k\in I_n$ such that $$x_k>0\mbox{ and }\left(\mathcal{A} \x^{m-1}\right)_k>0,$$
		or equivalently, $$x_k\left(\mathcal{A} \x^{m-1}\right)_k>0;$$
		\item[(iii)] {\bf copositive } if $\mathcal{A}\x^m\geq0$ for all $\x\in \mathbb{R}^n_+$;
		\item[(iv)] {\bf strictly copositive} if  $\mathcal{A}\x^m>0$ for all $\x\in \mathbb{R}^n_+\setminus\{\0\}$.
		\item[(v)]  {\bf  Q-tensor} iff the TCP$(\mathcal{A},\q)$ has a  solution for all $\q \in \mathbb{R}^n$.
	\end{itemize}
\end{definition}
The following are two basic conclusions in the study of (strictly) semi-positive tensors.
\begin{lemma}\label{le:21}{\em(Song and Qi \cite[Corollary 3.3, Theorem 3.4]{SQ2015} and \cite[Theorem 3.3, 3.4]{SQ15})} Each strictly semi-positive tensor must be  a Q-tensor. If $\mathcal{A} \in S_{m, n}$, then $\mathcal{A}$ is  (strictly) semi-positive if and only if it is  (strictly) copositive.
\end{lemma}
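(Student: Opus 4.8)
The plan is to treat the two assertions by different tools: the symmetric equivalence reduces to a variational (Karush--Kuhn--Tucker) computation, while the solvability claim needs a topological degree argument, which I expect to be the real obstacle. I would begin the symmetric part by dispatching the two implications that need no symmetry at all. The elementary identity $\mathcal{A}\x^m=\x^\top(\mathcal{A}\x^{m-1})=\sum_{i=1}^n x_i(\mathcal{A}\x^{m-1})_i$ holds for every tensor, and from it both ``copositive $\Rightarrow$ semi-positive'' and ``strictly copositive $\Rightarrow$ strictly semi-positive'' follow at once: if $\x\geq\0$, $\x\ne\0$ and $\mathcal{A}\x^m\geq 0$ (respectively $>0$), then it is impossible that $(\mathcal{A}\x^{m-1})_k<0$ (respectively $\leq 0$) for \emph{every} index $k$ with $x_k>0$, since that would force the weighted sum to be negative (respectively nonpositive). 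This isolates the only content requiring symmetry, namely the reverse implications.

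For ``(strictly) semi-positive $\Rightarrow$ (strictly) copositive'' I would argue by constrained minimization. By positive homogeneity of $\x\mapsto\mathcal{A}\x^m$ it suffices to show that the minimum of $f(\x)=\mathcal{A}\x^m$ over the compact set $S=\{\x\geq\0:\x^\top\x=1\}$ is nonnegative (respectively positive). Let $\bar\x\in S$ attain the minimum and suppose for contradiction that $f(\bar\x)<0$ (respectively $\leq 0$). Here symmetry enters through $\nabla f=m\,\mathcal{A}\x^{m-1}$, so the KKT conditions for minimizing $f$ over $S$ read $m\,\mathcal{A}\bar\x^{m-1}=2\lambda\bar\x+\mu$ with multipliers $\mu\geq\0$ obeying $\mu_i\bar x_i=0$ for all $i$. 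Pairing with $\bar\x$ and using $\x^\top\x=1$ gives $\lambda=\frac{m}{2}f(\bar\x)<0$ (respectively $\leq 0$), whence for every $k$ with $\bar x_k>0$ we have $\mu_k=0$ and therefore $(\mathcal{A}\bar\x^{m-1})_k=\frac{2\lambda}{m}\bar x_k<0$ (respectively $\leq 0$). This contradicts (strict) semi-positivity of $\mathcal{A}$ at the point $\bar\x\geq\0$, $\bar\x\ne\0$. Hence $f\geq 0$ (respectively $>0$) on $S$, i.e. $\mathcal{A}$ is (strictly) copositive.

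For the solvability claim I would reformulate $\mathrm{TCP}(\mathcal{A},\q)$ as finding a zero of the continuous map $\Phi_\q$ with $(\Phi_\q(\x))_i=\min\{x_i,(\q+\mathcal{A}\x^{m-1})_i\}$, and apply Brouwer degree theory. Strict semi-positivity first forces $\mathrm{TCP}(\mathcal{A},\0)$ to admit only $\0$, since a nonzero $\x\geq\0$ with $x_i(\mathcal{A}\x^{m-1})_i=0$ for all $i$ is excluded by Definition \ref{d21}(ii). This yields an a priori bound: were the solutions of $\mathrm{TCP}(\mathcal{A},\q_t)$, for $\q_t$ ranging over a bounded segment, unbounded, then normalizing a divergent sequence $\x^{(k)}$ and dividing the complementarity relations by $\|\x^{(k)}\|^{m-1}$ and by $\|\x^{(k)}\|^{m}$ would produce in the limit a nonzero solution of the homogeneous problem, a contradiction. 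Taking the reference datum $\q=\e>\0$, one checks that near $\0$ the map $\Phi_\e$ coincides with the identity (because $(\e+\mathcal{A}\x^{m-1})_i\to 1$ while $x_i\to 0$), so its local degree is $1$; as $\0$ is its only zero, the degree on a large ball $B_r$ is also $1$. The straight-line homotopy $\q_t=(1-t)\e+t\q$ keeps $\Phi_{\q_t}$ zero-free on $\partial B_r$ by the a priori bound, so homotopy invariance gives $\deg(\Phi_\q,B_r,\0)=1\neq 0$ and hence a solution for every $\q$; that is, $\mathcal{A}$ is a Q-tensor. The main obstacle I anticipate is precisely this last step: the algebra of the semi-positive/copositive equivalence and the a priori bound are routine, but making the degree argument rigorous requires care with the local-identity computation and with the zero-freeness of the homotopy on the boundary.
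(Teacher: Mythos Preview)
The paper does not prove Lemma~\ref{le:21} at all: it is quoted verbatim from external references (Song--Qi \cite{SQ2015,SQ15}) with no argument supplied. So there is no ``paper's own proof'' to compare against; your proposal is in fact more than the paper provides.

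On the merits, your argument is sound. The symmetric equivalence is handled correctly: the easy implications follow from the identity $\mathcal{A}\x^m=\sum_i x_i(\mathcal{A}\x^{m-1})_i$ as you say, and for the reverse implications your KKT computation on $\{\x\geq\0:\|\x\|_2=1\}$ is valid---symmetry is exactly what makes $\nabla(\mathcal{A}\x^m)=m\,\mathcal{A}\x^{m-1}$, and the multiplier sign then forces $(\mathcal{A}\bar\x^{m-1})_k<0$ (resp.\ $\leq 0$) on the support of the minimizer, contradicting (strict) semi-positivity. For the Q-tensor claim, the degree argument is also correct: the homogeneous problem has only the trivial solution by strict semi-positivity, the scaling/compactness argument gives the uniform a~priori bound along the homotopy $\q_t=(1-t)\e+t\q$, and the local identification $\Phi_\e(\x)=\x$ near $\0$ (together with the fact that $\mathrm{TCP}(\mathcal{A},\e)$ has only the zero solution, which you can verify directly from strict semi-positivity as in Lemma~\ref{le:25}) yields $\deg(\Phi_\e,B_r,\0)=1$. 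Your stated ``main obstacle'' is not really one: both the local-identity computation and the boundary zero-freeness are routine once the a~priori bound is in hand.
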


The concept of principal sub-tensors was introduced and used in \cite{LQ1} for symmetric tensors.
\begin{definition} \label{d22}  Let $\mathcal{A}  = (a_{i_1\cdots i_m}) \in T_{m, n}$.
	A tensor $\mathcal{C} \in T_{m, r}$  is called {\bf a principal sub-tensor}  of a tensor $\mathcal{A} = (a_{i_1\cdots i_m}) \in T_{m, n}$ ($1 \le r\leq n$) iff there is a set $J$ that is composed of $r$ elements in $I_n$ such that
	$$\mathcal{C} = (a_{i_1\cdots i_m}),\mbox{ for all } i_1, i_2, \cdots, i_m\in J.$$
	Denote such a principal sub-tensor $\mathcal{C}$ by $\mathcal{A}_r^J$. \end{definition}


\begin{lemma} \label{le:22} \cite[Proposition 2.1, 2.2]{SQ15}
	Let $\mathcal{A}\in T_{m,n}$. Then
	\begin{itemize}
		\item[(i)] $a_{ii\cdots i}\geq0$ for all $i\in I_n$ if  $\mathcal{A}$ is semi-positive;
		\item[(ii)] $a_{ii\cdots i}>0$ for all $i\in I_n$ if  $\mathcal{A}$ is strictly semi-positive;
		\item[(iii)] there exists $k\in I_n$ such that  $\sum\limits_{i_2, \cdots, i_m=1}^n a_{ki_2\cdots
			i_m}\geq0$ if  $\mathcal{A}$ is semi-positive;
		\item[(iv)] there exists $k\in I_n$ such that  $\sum\limits_{i_2, \cdots, i_m=1}^n a_{ki_2\cdots
			i_m}>0$ if  $\mathcal{A}$ is strictly semi-positive;
		\item[(v)] each principal sub-tensor of a semi-positive tensor is  semi-positive;
		\item[(vi)] each principal sub-tensor of a strictly semi-positive tensor is strictly semi-positive.
	\end{itemize}
\end{lemma}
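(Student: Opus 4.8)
The plan is to handle all six parts uniformly by Definition~\ref{d21}(i)--(ii): for each assertion I would plug a carefully chosen test vector $\x\ge\0$, $\x\ne\0$ into the (strict) semi-positivity hypothesis and simply read off what the condition $x_k(\mathcal{A}\x^{m-1})_k\ge0$ (resp.\ $>0$) forces at the index $k$ it returns. For (i) and (ii) I would take $\x=\e^{(i)}$. Then only the term with $i_2=\cdots=i_m=i$ survives, so $(\mathcal{A}\x^{m-1})_k=a_{kii\cdots i}$; since the $i$th coordinate is the unique positive one, the witness index must be $k=i$, and the semi-positivity inequality becomes $a_{ii\cdots i}\ge0$ (resp.\ $a_{ii\cdots i}>0$). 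As $i\in I_n$ is arbitrary, (i) and (ii) follow. For (iii) and (iv) I would instead take $\x=\e=(1,\dots,1)^\top$, so that $(\mathcal{A}\x^{m-1})_k=\sum_{i_2,\dots,i_m=1}^n a_{ki_2\cdots i_m}$; every coordinate of $\e$ is positive, hence the index $k$ supplied by (strict) semi-positivity satisfies exactly the row-sum inequality claimed.

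The substantive part is (v) and (vi). Given a principal sub-tensor $\mathcal{A}_r^J$ with $J\subset I_n$, $|J|=r$, and any $\y\in\mathbb{R}^r_{+}\setminus\{\0\}$, I would form the zero-padded vector $\x\in\mathbb{R}^n$ defined by $x_j=y_j$ for $j\in J$ and $x_j=0$ for $j\notin J$, so that $\x\ge\0$, $\x\ne\0$. The key identity is that for every $k\in J$, because $x_j=0$ off $J$, the only nonvanishing terms have all of $i_2,\dots,i_m\in J$, whence $(\mathcal{A}\x^{m-1})_k=(\mathcal{A}_r^J\y^{m-1})_k$. Applying the (strict) semi-positivity of $\mathcal{A}$ to $\x$ produces an index $k$ with $x_k>0$ and $(\mathcal{A}\x^{m-1})_k\ge0$ (resp.\ $>0$); but $x_k>0$ forces $k\in J$, so this $k$ corresponds to a coordinate of $\y$ with $y_k>0$ and $(\mathcal{A}_r^J\y^{m-1})_k\ge0$ (resp.\ $>0$). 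Since $\y$ was arbitrary, $\mathcal{A}_r^J$ is (strictly) semi-positive.

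The only step requiring genuine care, and the one I expect to be the main obstacle, is the argument in (v)--(vi): one must verify the reduction identity $(\mathcal{A}\x^{m-1})_k=(\mathcal{A}_r^J\y^{m-1})_k$ for $k\in J$ and, crucially, that the witness index returned by semi-positivity necessarily lands inside $J$ rather than outside it. It is precisely the pinning of the support of $\x$ to $J$ that both kills the extraneous indices in the sum and guarantees $k\in J$; everything else is a routine substitution into Definition~\ref{d21}.
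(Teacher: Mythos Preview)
Your argument is correct and is the standard one: the choices $\x=\e^{(i)}$, $\x=\e$, and the zero-padding $\x$ supported on $J$ are exactly the right test vectors, and your observation that $x_k>0$ forces $k\in J$ is precisely what makes (v)--(vi) work. Note that the paper does not actually supply its own proof of this lemma; it is quoted from \cite[Proposition~2.1, 2.2]{SQ15} as a known result, so there is no in-paper proof to compare against, but your proposal matches the expected argument.
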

The concepts of tensor eigenvalues were introduced by Qi \cite{LQ1,LQ2} to the higher order symmetric tensors, and the existence of the eigenvalues and some applications were studied there. Lim \cite{LL} independently introduced real tensor eigenvalues and obtained some existence results using a variational approach.
\begin{definition} \label{d23} 
	Let $\mathcal{A}  = (a_{i_1\cdots i_m}) \in T_{m, n}$.   \begin{itemize}
		\item[(i)] A number $\lambda \in \mathbb{C}$ is called an {\bf eigenvalue} of $\mathcal{A}$ iff there is a nonzero vector $\x \in \mathbb{C}^n$ such that
		\begin{equation} \label{eig}
		\mathcal{A} \x^{m-1} = \lambda \x^{[m-1]},
		\end{equation}
		and $\x$  is called an {\bf eigenvector} of $\mathcal{A}$, associated with $\lambda$.  An eigenvalue $\lambda$ corresponding a real eigenvector $\x$ is real and is called an {\bf
			$H$-eigenvalue}, and $\x$ is called an {\bf $H$-eigenvector} of $\mathcal{A}$, respectively;
		\item[(ii)]  A number $\lambda \in \mathbb{C}$ is called an {\bf E-eigenvalue} of $\mathcal{A}$ iff there is a nonzero vector $\x \in \mathbb{C}^n$ such that
		\begin{equation} \label{eig1}
		\mathcal{A} \x^{m-1} = \lambda \x,  \ \ \x^\top \x = 1,
		\end{equation}
		and $\x$ is
		called an {\bf $E$-eigenvector} of $\mathcal{A}$,  associated with $\lambda$.  An $E$-eigenvalue  $\lambda$ corresponding a real $E$-eigenvector $\x$ is real and is called an {\bf
			$Z$-eigenvalue}, and $\x$ is called an {\bf $Z$-eigenvector} of $\mathcal{A}$, respectively.\end{itemize}
\end{definition}
Recently, Qi \cite{LQ4} introduced and used the following concepts
for studying the properties of hypergraphs.
\begin{definition} \label{d24}  A real number
	$\lambda$ is said to be
	\begin{itemize}
		\item[(i)]  an {\bf
			$H^+$-eigenvalue of $\mathcal{A}$}  iff it is an $H$-eigenvalue and its $H$-eigenvector $\x\in
		\mathbb{R}^n_+$;
		\item[(ii)] an {\bf  $H^{++}$-eigenvalue of $\mathcal{A}$},
		iff it is an $H$-eigenvalue and its $H$-eigenvector $\x\in \mathbb{R}^n_{++}$.
		\item[(iii)] a {\bf $Z^+$-eigenvalue of $\mathcal{A}$} (\cite{SQ-15}) iff it is a $Z$-eigenvalue and its $Z$-eigenvector $\x\in \mathbb{R}^n_+$;
		\item[(iv)] a {\bf  $Z^{++}$-eigenvalue of $\mathcal{A}$} (\cite{SQ-15}) iff it is a $Z$-eigenvalue and its $Z$-eigenvector $\x\in \mathbb{R}^n_{++}$.
	\end{itemize}
\end{definition}

Recall that an operator $T : \mathbb{R}^n \to \mathbb{R}^n$ is called {\bf
	positively homogeneous} iff $T(t\x)=tT(\x)$ for each $t>0$ and all
$\x\in \mathbb{R}^n$.  For $\x \in \mathbb{R}^n$, it is known well that
$$\|\x\|_\infty:=\max\{|x_i|;i \in I_n \}\mbox{ and } \|\x
\|_p:=\left(\sum_{i=1}^n|x_i|^p\right)^{\frac1p}  \ (p\geq1)$$ are two main norms
defined on $\mathbb{R}^n$.  Then for a continuous, positively homogeneous
operator $T : \mathbb{R}^n\to \mathbb{R}^n$,  it is obvious that
\begin{equation} \label{eq23}\|T\|_p:=\max_{\|\x\|_p=1}\|T(\x)\|_p\mbox{ and }\|T\|_\infty:=\max_{\|\x\|_\infty=1}\|T(\x)\|_\infty\end{equation} are two
operator norms of $T$.

Let $\mathcal{A} \in T_{m, n}$.  Define an operator $T_\mathcal{A} : \mathbb{R}^n \to \mathbb{R}^n$
by for any $\x \in \mathbb{R}^n$,
\begin{equation} \label{TA}
T_\mathcal{A} (\x) := \begin{cases}\|\x \|_2^{2-m}\mathcal{A} \x^{m-1},\ \x \neq\0\\
\0,\ \ \ \ \ \ \ \ \ \ \  \ \ \ \ \ \ \
\x =\0.\end{cases}
\end{equation}
When $m$ is even, define another operator $F_\mathcal{A} : \mathbb{R}^n \to \mathbb{R}^n$ by for any $\x \in \mathbb{R}^n$,
\begin{equation} \label{FA}
F_\mathcal{A} (\x) :=\left(\mathcal{A}  \x^{m-1}\right)^{\left[\frac1{m-1}\right]}.
\end{equation}
Clearly, both
$F_\mathcal{A}$ and $T_\mathcal{A}$ are continuous and positively homogeneous. The
following upper bounds and properities of the operator norm were established  by
Song and Qi \cite{SQ}.
\begin{lemma} \label{le:23}   (Song and Qi \cite[Theorem 4.3,Lemma 2.1]{SQ})  Let $\mathcal{A} = (a_{i_1\cdots i_m}) \in T_{m, n}$. Then
	\begin{itemize}
		\item[(i)] $\|F_\mathcal{A}(\x)\|_{\infty}\leq \|F_\mathcal{A}\|_{\infty}\|\x\|_{\infty}$ and $\|F_\mathcal{A}(\x)\|_p\leq \|F_\mathcal{A}\|_p\|\x\|_p$;
		\item[(ii)]$\|T_\mathcal{A}(\x)\|_{\infty}\leq \|T_\mathcal{A}\|_{\infty}\|\x\|_{\infty}$ and $\|T_\mathcal{A}(\x)\|_p\leq \|T_\mathcal{A}\|_p\|\x\|_p$;
		\item[(iii)] $\|T_\mathcal{A}\|_{\infty}\leq \max\limits_{i\in I_n}\left(\sum\limits_{i_2,\cdots,i_m=1}^{n}|a_{ii_2\cdots i_m}|\right)$;
		\item[(iv)] $\|F_\mathcal{A}\|_{\infty}\leq\max\limits_{i\in I_n}\left(\sum\limits_{i_2,\cdots,i_m=1}^{n}|a_{ii_2\cdots i_m}|\right)^{\frac{1}{m-1}}$, when $m$ is even.
\end{itemize} \end{lemma}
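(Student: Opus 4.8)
The plan is to treat the two pairs of statements separately: parts (i)--(ii) are a soft consequence of positive homogeneity, while parts (iii)--(iv) are concrete component-wise estimates. Since the excerpt already records that $F_\mathcal{A}$ and $T_\mathcal{A}$ are continuous and positively homogeneous, and the unit sphere is compact, the operator norms in \reff{eq23} are attained and finite; I will use this freely.

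For parts (i) and (ii) I would first isolate the general principle that for any continuous, positively homogeneous operator $T$ and any norm $\|\cdot\|$, the operator norm $\|T\|:=\max_{\|\x\|=1}\|T(\x)\|$ dominates $T$ pointwise. Indeed, for $\x\neq\0$ set $\uu=\x/\|\x\|$, so $\|\uu\|=1$; positive homogeneity gives $T(\x)=T(\|\x\|\,\uu)=\|\x\|\,T(\uu)$, hence $\|T(\x)\|=\|\x\|\,\|T(\uu)\|\leq\|\x\|\,\|T\|$. The case $\x=\0$ is trivial since $T_\mathcal{A}(\0)=F_\mathcal{A}(\0)=\0$. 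Applying this with $T=F_\mathcal{A}$ and $T=T_\mathcal{A}$, in turn for the $\infty$-norm and the $p$-norm, yields both lines of (i) and (ii) at once.

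For part (iii) I would estimate $\|T_\mathcal{A}\|_\infty$ over the unit $\infty$-sphere directly. Fix $\x$ with $\|\x\|_\infty=1$, so that $|x_j|\leq 1$ for every $j$. The triangle inequality gives, for each $i\in I_n$,
\[
|(\mathcal{A}\x^{m-1})_i|\leq\sum_{i_2,\cdots,i_m=1}^n|a_{ii_2\cdots i_m}|\,|x_{i_2}|\cdots|x_{i_m}|\leq\sum_{i_2,\cdots,i_m=1}^n|a_{ii_2\cdots i_m}|,
\]
so that $\|\mathcal{A}\x^{m-1}\|_\infty\leq\max_{i\in I_n}\sum_{i_2,\cdots,i_m=1}^n|a_{ii_2\cdots i_m}|$. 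The one extra ingredient is the weight $\|\x\|_2^{2-m}$ appearing in \reff{TA}: since $m\geq 2$ the exponent $2-m$ is nonpositive, and since $\|\x\|_2\geq\|\x\|_\infty=1$ we get $\|\x\|_2^{2-m}\leq 1$. Hence $\|T_\mathcal{A}(\x)\|_\infty=\|\x\|_2^{2-m}\|\mathcal{A}\x^{m-1}\|_\infty$ is bounded by the same maximum, and taking the supremum over $\|\x\|_\infty=1$ gives (iii).

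Part (iv) is the same computation without the weight but with the root applied component-wise, and this is where the evenness of $m$ matters. When $m$ is even, $m-1$ is odd, so the real $(m-1)$th root is defined on all of $\mathbb{R}$ and $F_\mathcal{A}$ in \reff{FA} is a genuine real operator; for $\|\x\|_\infty=1$ one has $|F_\mathcal{A}(\x)_i|=|(\mathcal{A}\x^{m-1})_i|^{\frac1{m-1}}\leq\left(\sum_{i_2,\cdots,i_m=1}^n|a_{ii_2\cdots i_m}|\right)^{\frac1{m-1}}$, and maximizing over $i$ and over the unit sphere gives (iv). I expect no serious obstacle: the only points needing genuine care are the norm monotonicity $\|\cdot\|_\infty\leq\|\cdot\|_2$ combined with the sign of $2-m$ used in (iii), and the parity of $m$ used in (iv); everything else is routine triangle-inequality bookkeeping.
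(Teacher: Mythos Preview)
Your proof is correct. Note that the paper does not actually prove this lemma: it is quoted verbatim from Song and Qi \cite{SQ} (their Theorem 4.3 and Lemma 2.1), so there is no in-paper argument to compare against. Your derivation is the natural one: parts (i)--(ii) are the standard scaling trick for positively homogeneous maps, and parts (iii)--(iv) are the row-sum bound combined, in (iii), with the observation that $\|\x\|_2^{2-m}\le 1$ on the unit $\infty$-sphere because $m\ge 2$ and $\|\x\|_2\ge\|\x\|_\infty$. The evenness of $m$ in (iv) is used exactly as you say, to make $t\mapsto t^{1/(m-1)}$ a well-defined real map. Nothing is missing.
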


\begin{lemma}\label{le:24}
	Let $\mathcal{A} = (a_{i_1\cdots i_m}) \in T_{m, n}$. Then
	\begin{itemize}
		\item[(i)] $\frac1{\sqrt[p]{n}}\|F_\mathcal{A}\|_{\infty}\leq\|F_\mathcal{A}\|_p\leq \sqrt[p]{n}\|F_\mathcal{A}\|_{\infty}$;
		\item[(ii)]$\frac1{\sqrt[p]{n}}\|T_\mathcal{A}\|_{\infty}\leq\|T_\mathcal{A}\|_p\leq \sqrt[p]{n}\|T_\mathcal{A}\|_{\infty}$;
		\item[(iii)] $\|F_\mathcal{A}\|_p\leq \left(\sum\limits_{i=1}^{n}\left(\sum\limits_{i_2,\cdots,i_m=1}^{n}|a_{ii_2\cdots i_m}|\right)^{\frac{p}{m-1}}\right)^{\frac1p}$ when $m$ is even;
		\item[(iv)] $\|T_\mathcal{A}\|_p\leq n^{\frac{m-2}p}\left(\sum\limits_{i=1}^{n}\left(\sum\limits_{i_2,\cdots,i_m=1}^{n}|a_{ii_2\cdots i_m}|\right)^p\right)^{\frac1p}$.
	\end{itemize} 	
\end{lemma}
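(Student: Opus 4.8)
The plan is to reduce the whole lemma to the elementary vector-norm comparison on $\mathbb{R}^n$, namely $\|\x\|_\infty \le \|\x\|_p \le \sqrt[p]{n}\,\|\x\|_\infty$ for all $\x\in\mathbb{R}^n$ and $p\ge1$, which I would record at the outset (both halves follow from $\max_i|x_i|^p\le\sum_i|x_i|^p\le n\max_i|x_i|^p$). With this in hand, (i) and (ii) become two instances of a single statement, while (iii) and (iv) are direct entrywise estimates.

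For (i) and (ii) I would prove once and for all that every continuous, positively homogeneous $T:\mathbb{R}^n\to\mathbb{R}^n$ satisfies $\tfrac1{\sqrt[p]{n}}\|T\|_\infty\le\|T\|_p\le\sqrt[p]{n}\,\|T\|_\infty$, and then apply it to $T=F_\mathcal{A}$ and $T=T_\mathcal{A}$, both of which are continuous and positively homogeneous as already noted in the text. For the upper bound, pick $\x$ with $\|\x\|_p=1$, put $\y=\x/\|\x\|_\infty$ so that $\|\y\|_\infty=1$, and use positive homogeneity $T(\x)=\|\x\|_\infty\,T(\y)$ together with $\|T(\y)\|_p\le\sqrt[p]{n}\,\|T(\y)\|_\infty\le\sqrt[p]{n}\,\|T\|_\infty$ and $\|\x\|_\infty\le\|\x\|_p=1$; maximizing over the unit $p$-sphere gives $\|T\|_p\le\sqrt[p]{n}\,\|T\|_\infty$. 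The lower bound is the mirror argument: normalize in the $p$-norm, use $\|T(\y)\|_\infty\le\|T(\y)\|_p$ and $\|\x\|_p\le\sqrt[p]{n}\,\|\x\|_\infty=\sqrt[p]{n}$.

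For (iii) and (iv) I would estimate directly, writing $r_i:=\sum_{i_2,\dots,i_m}|a_{ii_2\cdots i_m}|$ and bounding each component by the triangle inequality and $|x_{i_j}|\le\|\x\|_\infty$, so that $|(\mathcal{A}\x^{m-1})_i|\le r_i\|\x\|_\infty^{m-1}$. In (iii), since $m$ is even, $m-1$ is odd and $F_\mathcal{A}(\x)_i=\left((\mathcal{A}\x^{m-1})_i\right)^{1/(m-1)}$ is a genuine real number, whence $|F_\mathcal{A}(\x)_i|^p=|(\mathcal{A}\x^{m-1})_i|^{p/(m-1)}\le r_i^{p/(m-1)}\|\x\|_\infty^p$; summing over $i$, using $\|\x\|_\infty\le\|\x\|_p=1$, and taking a $p$th root gives the stated bound after maximizing over $\|\x\|_p=1$. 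In (iv) the same component estimate yields $\|T_\mathcal{A}(\x)\|_p^p=\|\x\|_2^{(2-m)p}\sum_i|(\mathcal{A}\x^{m-1})_i|^p\le\|\x\|_2^{(2-m)p}\|\x\|_\infty^{(m-1)p}\sum_i r_i^p$; on the unit $p$-sphere I bound $\|\x\|_\infty^{m-1}\le\|\x\|_p^{m-1}=1$, and since $2-m\le0$ I lower-bound $\|\x\|_2\ge\|\x\|_\infty\ge\frac{1}{\sqrt[p]{n}}\|\x\|_p=n^{-1/p}$ to get $\|\x\|_2^{(2-m)p}\le n^{m-2}$, which after a $p$th root is exactly the factor $n^{(m-2)/p}$.

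The one real decision point, and hence the step I would watch, is the handling of $\|\x\|_2^{2-m}$ in (iv): because the exponent is nonpositive one must lower-bound $\|\x\|_2$, and the constant that appears is dictated by that choice. Using $\|\x\|_2\ge n^{-1/p}\|\x\|_p$ reproduces the stated $n^{(m-2)/p}$; I note in passing that the sharper choice $\|\x\|_2\ge\|\x\|_\infty$, absorbed into the $\|\x\|_\infty^{m-1}$ already present, would remove this factor altogether, so the stated inequality is comfortably valid. Everything else is routine, and the continuity and positive homogeneity of $F_\mathcal{A}$ and $T_\mathcal{A}$ needed in (i)--(ii) are already available in the text.
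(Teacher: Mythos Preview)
Your proof is correct and follows essentially the same route as the paper's: parts (i)--(ii) are obtained from the vector-norm comparison $\|\x\|_\infty\le\|\x\|_p\le\sqrt[p]{n}\,\|\x\|_\infty$ together with positive homogeneity (you abstract it to a general operator, the paper works with $F_\mathcal{A}$ and $T_\mathcal{A}$ directly via Lemma~\ref{le:23}, but the content is identical), and (iii)--(iv) use the same componentwise bound $|(\mathcal{A}\x^{m-1})_i|\le r_i\|\x\|_\infty^{m-1}$ and the same lower bound $\|\x\|_2\ge n^{-1/p}\|\x\|_p$ in (iv). Your side remark that absorbing $\|\x\|_2^{2-m}\le\|\x\|_\infty^{2-m}$ into $\|\x\|_\infty^{m-1}$ would eliminate the factor $n^{(m-2)/p}$ altogether is a correct sharpening beyond what the paper states.
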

\begin{proof}
	(i)  It follows from the definitions (\ref{eq23}) of the operator norm and the fact that $\|\x\|_\infty\leq\|\x\|_p\leq\sqrt[p]{n}\|\x\|_\infty$ that $$\begin{aligned}
	\|F_\mathcal{A}\|_p=\max_{\|\x\|_p=1}\|F_\mathcal{A}(\x)\|_p\leq& \max_{\|\x\|_p=1}\sqrt[p]{n}\|F_\mathcal{A}(\x)\|_\infty\\\leq&\max_{\|\x\|_p=1}\sqrt[p]{n}\|F_\mathcal{A}\|_\infty\|\x\|_\infty\\\leq&\max_{\|\x\|_p=1}\sqrt[p]{n}\|F_\mathcal{A}\|_\infty\|\x\|_p\\=&\sqrt[p]{n}\|F_\mathcal{A}\|_\infty
	\end{aligned}$$
	and 	
	$$\begin{aligned}
	\|F_\mathcal{A}\|_\infty=\max_{\|\x\|_\infty=1}\|F_\mathcal{A}(\x)\|_\infty\leq& \max_{\|\x\|_\infty=1}\|F_\mathcal{A}(\x)\|_p\\\leq&\max_{\|\x\|_\infty=1}\|F_\mathcal{A}\|_p\|\x\|_p\\
	\leq&\max_{\|\x\|_\infty=1}\sqrt[p]{n}\|F_\mathcal{A}\|_p\|\x\|_\infty\\=&\sqrt[p]{n}\|F_\mathcal{A}\|_p.
	\end{aligned}$$
	This proves (i). Similarly, (ii) is easy to obtain.
	
	(iii) It follows from the definition (\ref{eq23}) of the operator norm that
	$$\begin{aligned}
	\|F_\mathcal{A}\|_p^p=&(\max_{\|x\|_p=1}\|F_\mathcal{A}x\|_p)^p=\max_{\|x\|_p=1}\|F_\mathcal{A}x\|_p^p\\
	=& \max_{\|x\|_p=1}\sum_{i=1}^{n}\left|\left(\sum\limits_{i_2,\cdots,i_m=1}^{n}a_{ii_2\cdots i_m}x_{i_2}x_{i_3}\cdots x_{i_m}\right)^{\frac{1}{m-1}}\right|^p\\
	\leq&\max_{\|x\|_p=1}\sum_{i=1}^{n}\left(\sum\limits_{i_2,\cdots,i_m=1}^{n}|a_{ii_2\cdots i_m}||x_{i_2}||x_{i_3}|\cdots |x_{i_m}|\right)^{\frac{p}{m-1}}\\
	\leq&\max_{\|x\|_p=1}\sum_{i=1}^{n}\left(\sum\limits_{i_2,\cdots,i_m=1}^{n}|a_{ii_2\cdots i_m}|\|x\|_p^{m-1}\right)^{\frac{p}{m-1}}\\
	=&\max_{\|x\|_p=1}\sum_{i=1}^{n}\left(\sum\limits_{i_2,\cdots,i_m=1}^{n}|a_{ii_2\cdots i_m}|\right)^{\frac{p}{m-1}}\|x\|_p^p\\
	=&\sum_{i=1}^{n}\left(\sum\limits_{i_2,\cdots,i_m=1}^{n}|a_{ii_2\cdots i_m}|\right)^{\frac{p}{m-1}}.
	\end{aligned}$$
	
	(iv)
	It follows from the definition of the norm that $\|x\|_2\geq \frac1{\sqrt[p]{n}}\|x\|_p$ and
	$$\begin{aligned} \|T_\mathcal{A}\|_p^p=&\max_{\|x\|_\infty=1}\|T_\mathcal{A}x\|_p^p\\
	=& \max_{\|x\|_p=1}\sum_{i=1}^{n}\left|\|x\|_2^{-(m-2)}\sum\limits_{i_2,\cdots,i_m=1}^{n}a_{ii_2\cdots i_m}x_{i_2}x_{i_3}\cdots x_{i_m}\right|^p\\
	\leq&\max_{\|x\|_p=1}\|x\|_2^{-p(m-2)}\sum_{i=1}^{n}\left(\sum\limits_{i_2,\cdots,i_m=1}^{n}|a_{ii_2\cdots i_m}||x_{i_2}||x_{i_3}|\cdots |x_{i_m}|\right)^p\\
	\leq&\max_{\|x\|_p=1}n^{m-2}\|x\|_p^{-p(m-2)}\|x\|_p^{p(m-1)}\sum_{i=1}^{n}\left(\sum\limits_{i_2,\cdots,i_m=1}^{n}|a_{ii_2\cdots i_m}|\right)^p \\
	=&n^{m-2}\sum_{i=1}^{n}\left(\sum\limits_{i_2,\cdots,i_m=1}^{n}|a_{ii_2\cdots i_m}|\right)^p.
	\end{aligned}$$ This completes the proof.
\end{proof}
The following conclusion about the solution to TCP$(\mathcal{A},\q)$ with P-tensor $\mathcal{A}$ is obtained by Song and Qi \cite{SQ2015,SQ15}
\begin{lemma}\label{le:25} (Song and Qi \cite[Corollary 3.3, Theorem 3.4]{SQ2015} and \cite[Theorem 3.2]{SQ15}) Let $\mathcal{A} \in T_{m, n}$ be a  strictly semi-positive tensor. Then the TCP$(\mathcal{A},\q)$ has a solution for all $\q\in \mathbb{R}^n$, and  has only zero vector solution for $\q\geq\0$.
\end{lemma}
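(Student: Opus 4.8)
The plan is to treat the two assertions separately, since they have quite different characters. The first assertion---that TCP$(\mathcal{A},\q)$ is solvable for every $\q\in\mathbb{R}^n$---is precisely the statement that a strictly semi-positive tensor is a Q-tensor, and this is already recorded in Lemma~\ref{le:21}. So for that part I would do nothing more than cite Lemma~\ref{le:21}; no additional argument is needed.

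The real content is the second assertion: when $\q\geq\0$, the only solution is the zero vector. First I would observe that $\x=\0$ is indeed always a solution in this case, because $\0\geq\0$, $\q+\mathcal{A}\0^{m-1}=\q\geq\0$, and $\0^\top(\q+\mathcal{A}\0^{m-1})=0$. The substance is therefore uniqueness, which I would establish by contradiction: suppose $\x$ is a solution with $\x\neq\0$. From the feasibility conditions we have $\x\geq\0$ and $\q+\mathcal{A}\x^{m-1}\geq\0$, so each product $x_i\,(\q+\mathcal{A}\x^{m-1})_i$ is nonnegative; since their sum $\x^\top(\q+\mathcal{A}\x^{m-1})$ vanishes, every one of them must vanish, that is, $x_i\,(\q+\mathcal{A}\x^{m-1})_i=0$ for all $i\in I_n$.

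Now I would invoke strict semi-positivity. Since $\x\geq\0$ and $\x\neq\0$, Definition~\ref{d21}(ii) supplies an index $k\in I_n$ with $x_k>0$ and $(\mathcal{A}\x^{m-1})_k>0$. Because $x_k>0$, the componentwise complementarity relation at index $k$ forces $(\q+\mathcal{A}\x^{m-1})_k=0$, i.e. $(\mathcal{A}\x^{m-1})_k=-q_k$. But $\q\geq\0$ gives $q_k\geq0$, hence $(\mathcal{A}\x^{m-1})_k=-q_k\leq0$, contradicting $(\mathcal{A}\x^{m-1})_k>0$. Therefore no nonzero solution can exist, and $\x=\0$ is the unique solution.

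I do not expect a genuine obstacle here: the only step requiring any care is the passage from the single scalar identity $\x^\top(\q+\mathcal{A}\x^{m-1})=0$ to the $n$ componentwise identities $x_i\,(\q+\mathcal{A}\x^{m-1})_i=0$, which rests on both vectors being nonnegative so that no cancellation can occur. Once that is in place, strict semi-positivity is exactly the tool that produces an index $k$ at which $(\mathcal{A}\x^{m-1})_k$ is strictly positive, and this clashes immediately with the nonnegativity of $q_k$.
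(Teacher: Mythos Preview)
Your argument is correct. Note, however, that in the paper this lemma is simply quoted from Song and Qi \cite{SQ2015,SQ15} without an accompanying proof, so there is no in-paper argument to compare against; your contradiction via componentwise complementarity and Definition~\ref{d21}(ii) is the standard route and matches what one finds in the cited sources.
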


\section{Properties of  Semi-positive Tensors}

Recently, Song and Yu \cite{SY15} defined a quantity for a strictly semi-positive tensor $\mathcal{A}$.
\begin{equation} \label{Tbeta}
\beta(\mathcal{A}):=\min_{\|\x \|_\infty=1\atop \x\geq\0}\max_{i \in I_n}x_i(\mathcal{A}\x^{m-1})_i.
\end{equation}

\subsection{Monotonicity and boundedness of $\beta(\mathcal{A})$}

We now establish the monotonicity and boundedness of the constant  $\beta(\mathcal{A})$ for a (strictly) semi-positive tensor.   The proof technique is similar to the proof technique of \cite[Theorem 4.3]{SQ-2015} and \cite[Theorem 1.2]{XZ}.   For completeness, we give the proof here.
\begin{theorem}\label{t31}  Let  $\mathcal{D} =\mbox{diag}(d_1, d_2,\cdots, d_n)$ be a nonnegative diagonal tensor in $T_{m, n}$ and $\mathcal{A} = (a_{i_1\cdots i_m})$ be a semi-positive tensor in $T_{m, n}$. Then
	\begin{itemize}
		\item[(i)] $\beta(\mathcal{A})\leq\beta(\mathcal{A}+\mathcal{D})$;
		\item[(ii)] $\beta(\mathcal{A})\leq\beta(\mathcal{A}^J_r)$ for all principal sub-tensors $\mathcal{A}^J_r$;
		\item[(iii)] $\beta(\mathcal{A})\leq n^{\frac{m-2}2}\|T_{\mathcal{A}}\|_\infty$;
		\item[(iv)]  $\beta(\mathcal{A})\leq\|F_{\mathcal{A}}\|^{m-1}_\infty$   when $m$ is even.
	\end{itemize}
\end{theorem}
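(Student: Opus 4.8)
The plan is to use that $\beta(\mathcal{A})$ is the minimum over the compact set $S:=\{\x\in\mathbb{R}^n:\x\geq\0,\ \|\x\|_\infty=1\}$ of the continuous objective $\varphi_\mathcal{A}(\x):=\max_{i\in I_n}x_i(\mathcal{A}\x^{m-1})_i$, so the minimum is attained. Each of the four upper bounds then follows from one of two mechanisms: for (i) and (ii) by comparing the objective (or a feasible point) directly, and for (iii) and (iv) by a uniform estimate on $\varphi_\mathcal{A}(\x)$ valid for \emph{every} $\x\in S$, which automatically bounds the minimum. I will first record that semi-positivity forces $\beta(\mathcal{A})\geq0$: for any $\x\in S$, Definition \ref{d21}(i) supplies an index $k$ with $x_k>0$ and $(\mathcal{A}\x^{m-1})_k\geq0$, hence $\varphi_\mathcal{A}(\x)\geq x_k(\mathcal{A}\x^{m-1})_k\geq0$. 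For (i), since $\mathcal{D}$ is diagonal we have $x_i((\mathcal{A}+\mathcal{D})\x^{m-1})_i=x_i(\mathcal{A}\x^{m-1})_i+d_ix_i^{m}$, and on $S$ the extra term $d_ix_i^m$ is nonnegative; thus $\varphi_{\mathcal{A}+\mathcal{D}}(\x)\geq\varphi_\mathcal{A}(\x)$ pointwise on $S$, and taking the minimum over $S$ gives $\beta(\mathcal{A})\leq\beta(\mathcal{A}+\mathcal{D})$.

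For (ii), let $\y\in\mathbb{R}^r$ attain $\beta(\mathcal{A}^J_r)$ and extend it to $\x\in\mathbb{R}^n$ by $x_j=y_j$ for $j\in J$ and $x_i=0$ for $i\notin J$, so that $\x\in S$. Because the zero coordinates annihilate every monomial containing an index outside $J$, we obtain $(\mathcal{A}\x^{m-1})_i=(\mathcal{A}^J_r\y^{m-1})_i$ for $i\in J$, while $x_i(\mathcal{A}\x^{m-1})_i=0$ for $i\notin J$. Consequently $\varphi_\mathcal{A}(\x)=\max\{0,\ \beta(\mathcal{A}^J_r)\}$. By Lemma \ref{le:22}(v) the sub-tensor $\mathcal{A}^J_r$ is itself semi-positive, so $\beta(\mathcal{A}^J_r)\geq0$ by the opening remark; hence the maximum collapses to $\beta(\mathcal{A}^J_r)$ and $\beta(\mathcal{A})\leq\varphi_\mathcal{A}(\x)=\beta(\mathcal{A}^J_r)$.

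For (iii) and (iv) I bound $\varphi_\mathcal{A}$ uniformly on $S$. In (iii), $\varphi_\mathcal{A}(\x)\leq\|\x\|_\infty\|\mathcal{A}\x^{m-1}\|_\infty=\|\mathcal{A}\x^{m-1}\|_\infty$; writing $\mathcal{A}\x^{m-1}=\|\x\|_2^{m-2}T_\mathcal{A}(\x)$ from \reff{TA} and invoking Lemma \ref{le:23}(ii) yields $\|\mathcal{A}\x^{m-1}\|_\infty\leq\|\x\|_2^{m-2}\|T_\mathcal{A}\|_\infty\|\x\|_\infty$, and $\|\x\|_2\leq\sqrt{n}\,\|\x\|_\infty=\sqrt{n}$ gives the bound $n^{(m-2)/2}\|T_\mathcal{A}\|_\infty$. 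In (iv), $m$ even makes $m-1$ odd, so the real identity $|F_\mathcal{A}(\x)_i|^{m-1}=|(\mathcal{A}\x^{m-1})_i|$ holds; since $0\leq x_i\leq1$ we get $x_i(\mathcal{A}\x^{m-1})_i\leq|(\mathcal{A}\x^{m-1})_i|=|F_\mathcal{A}(\x)_i|^{m-1}$, whence $\varphi_\mathcal{A}(\x)\leq\|F_\mathcal{A}(\x)\|_\infty^{m-1}\leq(\|F_\mathcal{A}\|_\infty\|\x\|_\infty)^{m-1}=\|F_\mathcal{A}\|_\infty^{m-1}$ by Lemma \ref{le:23}(i). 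Each estimate holds for all $\x\in S$, hence for the minimizer, giving the stated bounds.

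The genuinely delicate step is (ii): the zero-extension of the optimal $\y$ introduces coordinates $i\notin J$ that contribute the value $0$ to the maximum, and a priori these could dominate $\beta(\mathcal{A}^J_r)$. What makes them harmless is precisely that semi-positivity of $\mathcal{A}$, inherited by $\mathcal{A}^J_r$ through Lemma \ref{le:22}(v), forces $\beta(\mathcal{A}^J_r)\geq0$. The remaining parts are routine once the operator-norm inequalities of Lemma \ref{le:23} are combined with the norm comparison $\|\x\|_2\leq\sqrt{n}\,\|\x\|_\infty$.
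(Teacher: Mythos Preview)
Your proof is correct and follows essentially the same approach as the paper: pointwise comparison of the objective for (i), zero-extension of feasible vectors for (ii), and uniform estimates via the operator norms from Lemma~\ref{le:23} together with $\|\x\|_2\leq\sqrt{n}\,\|\x\|_\infty$ for (iii) and (iv). Your treatment of (ii) is in fact a bit more careful than the paper's, since you explicitly note that the coordinates $i\notin J$ contribute the value $0$ to the maximum and then use semi-positivity of $\mathcal{A}^J_r$ (via Lemma~\ref{le:22}(v)) to ensure $\beta(\mathcal{A}^J_r)\geq 0$; the paper's displayed chain of equalities silently relies on the same fact.
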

\begin{proof}
	(i) By the definition of semi-positive tensors, clearly $\mathcal{A}+\mathcal{D}$ is a semi-positive tensor.  Then $\beta(\mathcal{A}+\mathcal{D})$ is well-defined. Then we have
	$$\begin{aligned}
	\beta(\mathcal{A})=&\min_{\|\x\|_\infty=1\atop \x\geq\0}\max_{i\in I_n}x_i(\mathcal{A}\x^{m-1})_i\\
	\leq &\min_{\|\x\|_\infty=1\atop \x\geq\0}\max_{i\in I_n}\left(x_i(\mathcal{A} \x^{m-1})_i+d_ix_i^m\right)\\
	=& \min_{\|\x\|_\infty=1\atop \x\geq\0}\max_{i\in I_n}x_i\left((\mathcal{A}+\mathcal{D})\x^{m-1}\right)_i\\
	=&\beta({\mathcal{A}+\mathcal{D}}).
	\end{aligned}$$
	
	(ii) Let a principal sub-tensor $\mathcal{A}^J_r$ of $\mathcal{A}$ be given. Then for each nonzero vector $\z=(z_1,\cdots,z_r)^\top \in \mathbb{R}^r_+$, we may define $\y(\z)=(y_1(\z),y_2(\z), \cdots, y_n(\z))^\top \in \mathbb{R}^n_+$ with $y_i(\z)=z_i $ for $ i\in J$ and $y_i(\z)=0$ for $i\notin J$.  Thus $\|\z\|_\infty=\|\y(\z)\|_\infty$, and hence,
	$$\begin{aligned}
	\beta(\mathcal{A})=&\min_{\|\x\|_\infty=1\atop \x\geq\0}\max_{i\in I_n}x_i(\mathcal{A}\x^{m-1})_i\\
	\leq & \min_{\|\y(\z)\|_\infty=1\atop \y(\z)\geq\0}\max_{i\in I_n}(\y(\z))_i(\mathcal{A}(\y(\z))^{m-1})_i\\
	=& \min_{\|\z\|_\infty=1\atop \z\geq\0}\max_{i\in I_n}z_i(\mathcal{A}^J_r\z^{m-1})_i\\
	=&\beta({\mathcal{A}^J_r}).
	\end{aligned}$$
	
	(iii) It follows from Lemma \ref{le:23} that for each nonzero vector $\x=(x_1,\cdots,x_n)^\top \in \mathbb{R}^n_+$ and each $i\in I_n$,
	$$x_i(\mathcal{A}\x^{m-1})_i=x_i(\|\x\|^{m-2}_2T_\mathcal{A}\x^{m-1})_i \leq \|\x\|^{m-2}_2\|\x\|_\infty \|T_\mathcal{A}(\x)\|_\infty\leq\|\x\|^{m-2}_2\|T_\mathcal{A}\|_\infty\|\x\|_\infty^2,$$
	Then using $\|\x\|_2\leq \sqrt{n}\|\x\|_\infty$, we have $$\max_{i\in I_n}x_i(\mathcal{A}\x^{m-1})_i\leq\|\x\|^{m-2}_2\|T_\mathcal{A}\|_\infty\|\x\|^2_\infty\leq n^{\frac{m-2}2}\|T_\mathcal{A}\|_\infty\|\x\|^m_\infty.$$
	Therefore, we have $$\beta(\mathcal{A})=\min_{\|\x\|_\infty=1\atop \x\geq\0}\max_{i\in I_n}x_i(\mathcal{A}\x^{m-1})_i\leq n^{\frac{m-2}2} \|T_\mathcal{A}\|_\infty.$$
	
	(iv) It follows from Lemma \ref{le:23} that for each nonzero vector $\x=(x_1,\cdots,x_n)^\top \in \mathbb{R}^n_+$ and each $i\in I_n$,
	$$x_i(\mathcal{A}\x^{m-1})_i=x_i(F_\mathcal{A}\x^{m-1})^{m-1}_i \leq \|\x\|_\infty \|F_\mathcal{A}(\x)\|^{m-1}_\infty\leq\|F_\mathcal{A}\|_\infty^{m-1}\|\x\|_\infty^m,$$
	Then  we have $$\max_{i\in I_n}x_i(\mathcal{A}\x^{m-1})_i\leq\|F_\mathcal{A}\|_\infty^{m-1}\|\x\|^m_\infty.$$
	Therefore, we have $$\beta(\mathcal{A})=\min_{\|\x\|_\infty=1\atop \x\geq\0}\max_{i\in I_n}x_i(\mathcal{A}\x^{m-1})_i\leq  \|F_\mathcal{A}\|_\infty^{m-1}.$$ The desired conclusions follow.
\end{proof}

\subsection{Necessary  and sufficient conditions of strictly semi-positive tensor}
We now give necessary  and sufficient conditions for a tensor $A \in T_{m, n}$ to be a strictly semi-positive tensor, based upon the constant $\beta(\mathcal{A})$.

\begin{theorem}\label{t32}  Let $\mathcal{A} \in T_{m, n}$.  Then
	\begin{itemize}
		\item[(i)] $\mathcal{A}$ is a strictly semi-positive tensor if and only if $\beta(\mathcal{A})>0$;
		\item[(ii)]$\beta(\mathcal{A})\geq0$ if $\mathcal{A}$ is a semi-positive tensor.
	\end{itemize}
\end{theorem}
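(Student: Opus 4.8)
The plan is to exploit two structural features of the expression inside the definition \reff{Tbeta}. First, the map $f(\x):=\max_{i\in I_n} x_i(\mathcal{A}\x^{m-1})_i$ is continuous, being the pointwise maximum of finitely many polynomials. Second, it is positively homogeneous of degree $m$: under $\x\mapsto t\x$ with $t>0$ each term satisfies $(t x_i)(\mathcal{A}(t\x)^{m-1})_i=t^m\,x_i(\mathcal{A}\x^{m-1})_i$. Before anything else I would record that the feasible set $S=\{\x\in\mathbb{R}^n:\x\geq\0,\ \|\x\|_\infty=1\}$ is compact, so the minimum defining $\beta(\mathcal{A})$ is genuinely attained at some $\x^*\in S$. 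This attainment is the hinge of the whole argument.

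For part (ii) and the easy direction of (i), I would argue pointwise. If $\mathcal{A}$ is semi-positive, then for every $\x\in S$ Definition \ref{d21}(i) supplies an index $k$ with $x_k>0$ and $(\mathcal{A}\x^{m-1})_k\geq0$, whence $f(\x)\geq x_k(\mathcal{A}\x^{m-1})_k\geq0$; minimizing over $S$ gives $\beta(\mathcal{A})\geq0$, which is (ii). The identical reasoning with strict semi-positivity, Definition \ref{d21}(ii), shows that for each individual $\x\in S$ we have $f(\x)>0$.

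The main obstacle is the forward implication of (i): knowing only that $f(\x)>0$ for every $\x\in S$ does not by itself preclude $\inf_S f=0$. Here I would invoke compactness decisively: since $f$ is continuous on the compact set $S$, its minimum $\beta(\mathcal{A})=f(\x^*)$ is attained, and because $f(\x^*)>0$ we conclude $\beta(\mathcal{A})>0$. This is the one step where continuity plus compactness, rather than a mere inequality, is indispensable.

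For the converse of (i), I would use homogeneity to lift the bound from normalized vectors to arbitrary ones. Given any $\x\geq\0$ with $\x\ne\0$, set $\bar\x=\x/\|\x\|_\infty\in S$. The definition of $\beta(\mathcal{A})$ as a minimum over $S$ yields $f(\bar\x)\geq\beta(\mathcal{A})>0$, and degree-$m$ homogeneity gives $f(\x)=\|\x\|_\infty^m\, f(\bar\x)>0$. Hence some index $k$ satisfies $x_k(\mathcal{A}\x^{m-1})_k>0$; since $x_k\geq0$, this forces both $x_k>0$ and $(\mathcal{A}\x^{m-1})_k>0$, which is precisely strict semi-positivity by Definition \ref{d21}(ii). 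Assembling the two directions completes (i).
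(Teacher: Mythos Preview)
Your proof is correct and follows essentially the same route as the paper's: both directions of (i) rest on exactly the pointwise argument from Definition~\ref{d21}(ii) together with normalization via $\x\mapsto\x/\|\x\|_\infty$, and (ii) is the obvious non-strict variant. The only notable difference is that you make explicit the compactness/continuity step guaranteeing that the minimum in \reff{Tbeta} is attained (so that pointwise positivity of $f$ on $S$ forces $\beta(\mathcal{A})>0$), whereas the paper passes over this point silently; your version is the more careful one, but the underlying argument is the same.
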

\begin{proof}
	(i) Let $\mathcal{A}$ be strictly semi-positive. Then it follows from the definition of strictly semi-positive tensors that for each  $\x=(x_1,x_2,\cdots,x_n)^\top \in \mathbb{R}^n_+$ and $\x\ne\0$, there exists $k\in I_n$ such that \begin{equation}
	\label{eq32}x_k>0\mbox{ and }(\mathcal{A} \x^{m-1})_k>0, \mbox{ i.e., }x_k(\mathcal{A} \x^{m-1})_k>0.
	\end{equation} So, we have $$\max_{i\in I_n}x_i(\mathcal{A} \x^{m-1})_i>0.$$ Thus $$\beta(\mathcal{A})=\min_{\|\x \|_\infty=1\atop \x\geq\0}\max_{i\in I_n}x_i(\mathcal{A}\x^{m-1})_i>0.$$
	
	If $\beta(\mathcal{A})>0$, then it is obvious that for each $\y \in \mathbb{R}^n_+$ and  $\y\ne\0$,
	$$\max_{i\in I_n}\left(\frac{\y}{\|\y\|_\infty}\right)_i\left(\mathcal{A}\left(\frac{\y}{\|\y\|_\infty}\right)^{m-1}\right)_i\geq\beta(\mathcal{A}) >0.$$ Hence, by $\|\y\|_\infty>0$, we have $$\max_{i\in I_n}y_i(\mathcal{A}\y^{m-1})_i>0.$$
	Thus   $y_k(\mathcal{A}\y^{m-1})_k>0$ for some $k\in I_n$, i.e., $\mathcal{A}$ is a strictly semi-positive tensor.
	
	(ii)  The proof is similar to ones of (i), we omit it.
\end{proof}
\begin{corollary}\label{c33}  Let $\mathcal{A} \in T_{m, n}$.  Then
	\begin{itemize}
		\item[(i)] $\min\limits_{i\in I_n}a_{ii\cdots i}\geq\beta(\mathcal{A})>0$ if $\mathcal{A}$ is a strictly semi-positive tensor;
		\item[(ii)] $\min\limits_{i\in I_n}a_{ii\cdots i}\geq\beta(\mathcal{A})\geq0$ if $\mathcal{A}$ is a semi-positive tensor.
	\end{itemize}	
\end{corollary}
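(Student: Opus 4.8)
The plan is to combine the sign information from Theorem \ref{t32} (which gives $\beta(\mathcal{A})>0$ in the strictly semi-positive case and $\beta(\mathcal{A})\geq0$ in the semi-positive case) with a single upper bound $\beta(\mathcal{A})\leq\min_{i\in I_n}a_{ii\cdots i}$, obtained by evaluating the defining minimum of $\beta(\mathcal{A})$ at a well-chosen family of feasible vectors. Since $\beta(\mathcal{A})$ is a minimum over all $\x\geq\0$ with $\|\x\|_\infty=1$, evaluating the inner quantity $\max_{i\in I_n}x_i(\mathcal{A}\x^{m-1})_i$ at any one such $\x$ already furnishes an upper bound for $\beta(\mathcal{A})$. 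The natural test vectors are the standard unit vectors $\e^{(k)}$, $k\in I_n$, each of which satisfies $\e^{(k)}\geq\0$ and $\|\e^{(k)}\|_\infty=1$.

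The key computation is that for $\x=\e^{(k)}$ only the term with $i_2=\cdots=i_m=k$ survives in $\mathcal{A}\x^{m-1}$, so that $(\mathcal{A}(\e^{(k)})^{m-1})_i=a_{ikk\cdots k}$ for each $i$. Multiplying by the coordinate $(\e^{(k)})_i$, which is nonzero only for $i=k$, gives $x_i(\mathcal{A}\x^{m-1})_i=0$ for $i\neq k$ and $x_k(\mathcal{A}\x^{m-1})_k=a_{kk\cdots k}$. By Lemma \ref{le:22} the diagonal entry $a_{kk\cdots k}$ is nonnegative in the semi-positive case and strictly positive in the strictly semi-positive case; in either case the maximum over $i\in I_n$ equals $a_{kk\cdots k}$. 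Hence $\beta(\mathcal{A})\leq a_{kk\cdots k}$ for every $k\in I_n$, and taking the minimum over $k$ yields $\beta(\mathcal{A})\leq\min_{i\in I_n}a_{ii\cdots i}$.

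Finally I would assemble the chain of inequalities separately in the two cases. In the strictly semi-positive case, Theorem \ref{t32}(i) gives $\beta(\mathcal{A})>0$, so $\min_{i\in I_n}a_{ii\cdots i}\geq\beta(\mathcal{A})>0$, which is (i). In the semi-positive case, Theorem \ref{t32}(ii) gives $\beta(\mathcal{A})\geq0$, so $\min_{i\in I_n}a_{ii\cdots i}\geq\beta(\mathcal{A})\geq0$, which is (ii).

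I do not expect a serious obstacle here: the corollary is essentially a packaging of Theorem \ref{t32} with one evaluation of $\beta(\mathcal{A})$ at the unit vectors. The only point requiring a little care is the observation that the maximum collapses to the single diagonal entry $a_{kk\cdots k}$, which is exactly where the nonnegativity of the diagonal guaranteed by Lemma \ref{le:22} is used; without it one would only conclude $\beta(\mathcal{A})\leq\max\{0,a_{kk\cdots k}\}$, which suffices for the final inequalities but is slightly less clean.
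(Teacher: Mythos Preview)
Your argument is correct. The route differs slightly from the paper's: instead of evaluating $\beta(\mathcal{A})$ at the unit vectors $\e^{(k)}$ directly, the paper invokes the monotonicity result Theorem~\ref{t31}(ii), $\beta(\mathcal{A})\le\beta(\mathcal{A}^J_r)$, specialized to the one-dimensional principal sub-tensors $J=\{i\}$, $r=1$, and then computes $\beta(\mathcal{A}^{\{i\}}_1)=a_{ii\cdots i}$. Unwinding the proof of Theorem~\ref{t31}(ii) in this special case (the embedding $\z\mapsto\y(\z)$ with zeros outside $J$) recovers exactly your evaluation at $\e^{(k)}$, so the two arguments are really the same computation packaged differently. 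Your version is the more elementary, self-contained one; the paper's version has the advantage of exhibiting the corollary as an instance of the general sub-tensor monotonicity already established.
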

\begin{proof}
	(i) It follows from Theorem \ref{t31} (ii) that  $$\beta(\mathcal{A})\leq\beta(\mathcal{A}^J_r)\mbox{ for all }J\subset I_n, r\in I_n.$$
	Choose $J=\{i\}$ for each $i\in I_n$ and $r=1$. Then for all $\x=x_i\in\mathbb{R}^1$, we have $$\mathcal{A}^J_r\x^{m-1}=a_{ii\cdots i}x_i^{m-1}.$$ Let $\|\x\|_\infty=1$. Then $\x=1$ or $-1$, and hence
	$$\beta(\mathcal{A}^J_r)=\min_{\|\x \|_\infty=1\atop \x\geq\0}\max_{i\in J}x_i(\mathcal{A}^J_r\x^{m-1})_i=1\times a_{ii\cdots i} \times 1^{m-1}=a_{ii\cdots i}.$$ Since $i\in I_n$ is arbitrary, the desired conclusion follows.
	
	Similarly, (ii) is easy to obtain, we omit it.
\end{proof}
Combing the above conclusions and Lemma \ref{le:21}, the following are easy to obtain.
\begin{corollary}\label{c34}  Let $\mathcal{A} \in S_{m, n}$.  Then
	\begin{itemize}
		\item[(i)] $\mathcal{A}$ is a strictly copositive tensor if and only if $\beta(\mathcal{A})>0$;
		\item[(ii)]$\beta(\mathcal{A})\geq0$ if $\mathcal{A}$ is a copositive tensor.
		\item[(iii)] $\min\limits_{i\in I_n}a_{ii\cdots i}\geq\beta(\mathcal{A})>0$ if $\mathcal{A}$ is strictly copositive;
		\item[(iv)] $\min\limits_{i\in I_n}a_{ii\cdots i}\geq\beta(\mathcal{A})\geq0$ if $\mathcal{A}$ is copositive.
	\end{itemize}	
\end{corollary}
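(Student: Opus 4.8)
The plan is to treat this entirely as a translation exercise, exploiting the symmetry hypothesis $\mathcal{A} \in S_{m, n}$ to convert every statement about copositivity into the corresponding statement about semi-positivity, and then to quote the results already established in this section. The key tool is the second half of Lemma \ref{le:21}, which asserts that for a symmetric tensor, $\mathcal{A}$ is copositive if and only if it is semi-positive, and $\mathcal{A}$ is strictly copositive if and only if it is strictly semi-positive. Once this dictionary is in hand, each of the four claims is a one-line deduction, which is why the paper flags the result as easy to obtain.

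For part (i), I would argue as follows: since $\mathcal{A}$ is symmetric, Lemma \ref{le:21} gives that $\mathcal{A}$ is strictly copositive precisely when it is strictly semi-positive, and by Theorem \ref{t32}(i) the latter holds precisely when $\beta(\mathcal{A}) > 0$; chaining these two equivalences yields the stated biconditional. Part (ii) is the corresponding one-directional statement: if $\mathcal{A}$ is copositive then, by Lemma \ref{le:21}, it is semi-positive, and then Theorem \ref{t32}(ii) gives $\beta(\mathcal{A}) \geq 0$. Parts (iii) and (iv) are handled in exactly the same way, except that instead of Theorem \ref{t32} one invokes Corollary \ref{c33}: strict copositivity passes through symmetry to strict semi-positivity, so Corollary \ref{c33}(i) delivers $\min_{i \in I_n} a_{ii\cdots i} \geq \beta(\mathcal{A}) > 0$, and copositivity passes to semi-positivity, so Corollary \ref{c33}(ii) delivers $\min_{i \in I_n} a_{ii\cdots i} \geq \beta(\mathcal{A}) \geq 0$.

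There is no genuine analytic obstacle here; the entire content of the corollary is the observation that symmetry collapses the two a priori distinct notions (copositive/semi-positive and their strict versions), so that the inequalities and equivalences proved for semi-positive tensors transfer verbatim. The only point requiring any care is bookkeeping: one must match the strict and non-strict versions correctly and make sure the symmetry hypothesis is invoked before applying Lemma \ref{le:21}, since without $\mathcal{A} \in S_{m, n}$ the equivalence between copositivity and semi-positivity fails in general. Accordingly, I would simply write out the four short implications, each citing Lemma \ref{le:21} together with the appropriate clause of Theorem \ref{t32} or Corollary \ref{c33}, and conclude.
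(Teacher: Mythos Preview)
Your proposal is correct and matches the paper's approach exactly: the paper does not give an explicit proof but simply states that the corollary is obtained by combining ``the above conclusions'' (i.e., Theorem \ref{t32} and Corollary \ref{c33}) with Lemma \ref{le:21}. Your spelled-out deductions are precisely the intended argument.
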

\subsection{Eigenvalues of a semi-positive tensor}
\begin{theorem}\label{t34} Let $\mathcal{A} \in T_{m, n}$.
	\begin{itemize}
		\item[(i)] If $\mathcal{A}$ is a strictly semi-positive tensor, then all $H^+$-eigenvalues of $\mathcal{A}$ are positive;
		\item[(ii)] If $\mathcal{A}$ is a semi-positive tensor, then all $H^+$-eigenvalues of $\mathcal{A}$ are nonnegative;
		\item[(iii)] If $\mathcal{A}$ is a strictly semi-positive tensor, then all $Z^+$-eigenvalues of $\mathcal{A}$ are positive;
		\item[(iv)] If $\mathcal{A}$ is a semi-positive tensor, then all $Z^+$-eigenvalues of $\mathcal{A}$ are nonnegative.
	\end{itemize}
\end{theorem}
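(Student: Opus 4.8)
The plan is to handle all four parts uniformly by evaluating the defining inequality of (strict) semi-positivity at the eigenvector itself, then reading off a single scalar equation from one well-chosen coordinate of the eigen-equation. First I would fix an $H^+$-eigenpair $(\lambda, \x)$, so that $\mathcal{A}\x^{m-1} = \lambda \x^{[m-1]}$ with $\x \in \mathbb{R}^n_+$ and $\x \neq \0$ (an eigenvector is nonzero by definition). Since $\mathcal{A}$ is strictly semi-positive and $\x \geq \0$, $\x \neq \0$, Definition \ref{d21}(ii) supplies an index $k \in I_n$ with $x_k > 0$ and $(\mathcal{A}\x^{m-1})_k > 0$. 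Comparing the $k$th components of the eigen-equation gives $(\mathcal{A}\x^{m-1})_k = \lambda x_k^{m-1}$. Because $x_k > 0$ forces $x_k^{m-1} > 0$, I can solve $\lambda = (\mathcal{A}\x^{m-1})_k / x_k^{m-1}$, and the numerator being positive yields $\lambda > 0$; this proves (i). Part (ii) is identical except that semi-positivity (Definition \ref{d21}(i)) only guarantees $(\mathcal{A}\x^{m-1})_k \geq 0$, so the same quotient gives $\lambda \geq 0$.

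For (iii) and (iv) I would repeat the argument with the $Z$-eigen-equation $\mathcal{A}\x^{m-1} = \lambda \x$, $\x^\top \x = 1$, $\x \in \mathbb{R}^n_+$, where the normalization $\x^\top \x = 1$ already ensures $\x \neq \0$. The $k$th component now reads $(\mathcal{A}\x^{m-1})_k = \lambda x_k$, so $\lambda = (\mathcal{A}\x^{m-1})_k / x_k$ with $x_k > 0$; strict semi-positivity gives $\lambda > 0$ for part (iii) and semi-positivity gives $\lambda \geq 0$ for part (iv).

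There is no substantive obstacle here: the whole argument turns on the observation that the coordinate $k$ produced by (strict) semi-positivity is exactly a coordinate where, via the eigen-equation, $(\mathcal{A}\x^{m-1})_k$ is expressed as $\lambda$ times a strictly positive quantity ($x_k^{m-1}$ in the $H$-case, $x_k$ in the $Z$-case), so the sign of the product $x_k(\mathcal{A}\x^{m-1})_k$ transfers directly to the sign of $\lambda$. The only point requiring a word of care is to check that the eigenvector qualifies as a test vector for the semi-positivity condition, i.e. that it is nonnegative and nonzero, which is immediate from the definitions of $H^+$- and $Z^+$-eigenvalue in Definition \ref{d24}.
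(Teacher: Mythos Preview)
Your proposal is correct and follows essentially the same approach as the paper: apply the (strict) semi-positivity definition to the eigenvector to obtain an index $k$ with $x_k>0$ and $(\mathcal{A}\x^{m-1})_k>0$ (resp.\ $\geq 0$), then read off the sign of $\lambda$ from the $k$th component of the eigen-equation. The paper writes out part (i) in exactly this way and dismisses (ii)--(iv) as similar, whereas you spell those out as well; but the underlying argument is the same.
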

\begin{proof}
	(i) Let $\mathcal{A}$ be a strictly semi-positive tensor. Then it follows from the definition of strictly semi-positive tensors that for each  $\x=(x_1,x_2,\cdots,x_n)^\top \in \mathbb{R}^n_+$ and $\x\ne\0$, there exists $k\in I_n$ such that \begin{equation} \label{eq33} x_k>0\mbox{ and }(\mathcal{A} \x^{m-1})_k>0. \end{equation}
	Let $\lambda$ be an H$^+$-eigenvalue of $\mathcal{A}$. Then there exists a vector $\y\in\mathbb{R}^n_+$ and $\y\ne\0$ such that
	\begin{equation} \label{eq34}\left(\mathcal{A}\y^{m-1}\right)_i=\lambda y_i^{m-1}\mbox{ for all }i\in I_n. \end{equation}
	Putting $\x=\y$ in (\ref{eq33}), there exists $k\in I_n$ such that
	\begin{equation} \label{eq35}y_k>0\mbox{ and }(\mathcal{A} \y^{m-1})_k>0.\end{equation}
	Combining (\ref{eq34}) and (\ref{eq35}), we have
	$$0<\left(\mathcal{A}\y^{m-1}\right)_k=\lambda y_k^{m-1},$$
	and so, $\lambda>0$.  Since $\lambda$ is an arbitrary $H^+$-eigenvalue of $\mathcal{A}$, the desired conclusion follows.
	
	Similarly, it is easy to prove (ii), (iii) and (iv).	
\end{proof}

By Lemma \ref{le:22}, the following corollary is easy to be proved.
\begin{corollary} \label{c35}  Let $\mathcal{A} \in T_{m, n}$. Then
	\begin{itemize}
		\item[(i)]  all $H^+$-eigenvalues of each principal sub-tensor of $\mathcal{A}$ are nonnegative (positive) if $\mathcal{A}$ is a (strictly) semi-positive tensor;
		\item[(ii)]  all $Z^+$-eigenvalues of each principal sub-tensor of $\mathcal{A}$ are nonnegative (positive) if $\mathcal{A}$ is a (strictly) semi-positive tensor.
	\end{itemize}
\end{corollary}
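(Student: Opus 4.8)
The plan is to reduce the statement about principal sub-tensors directly to Theorem \ref{t34}, which already pins down the sign of the $H^+$- and $Z^+$-eigenvalues of a (strictly) semi-positive tensor itself. The bridge is Lemma \ref{le:22}: parts (v) and (vi) guarantee that (strict) semi-positivity is inherited by every principal sub-tensor. Consequently the whole argument is a short composition of two results already in hand, and no fresh work with eigenvectors or with the quantity $\beta(\mathcal{A})$ is needed.

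First I would fix an arbitrary principal sub-tensor $\mathcal{A}^J_r$ of $\mathcal{A}$, where $J \subset I_n$ with $|J| = r$ and $1 \le r \le n$. Observe that $\mathcal{A}^J_r \in T_{m, r}$ is itself a tensor of the same order $m$, so the notions of $H^+$-eigenvalue and $Z^+$-eigenvalue from Definition \ref{d24} apply to it verbatim, with $\mathbb{R}^r_+$ in place of $\mathbb{R}^n_+$. This observation is what lets me invoke Theorem \ref{t34} on the sub-tensor rather than on $\mathcal{A}$ itself.

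For part (i), I would split according to the two cases. If $\mathcal{A}$ is semi-positive, then Lemma \ref{le:22}(v) makes $\mathcal{A}^J_r$ semi-positive, and Theorem \ref{t34}(ii) applied to $\mathcal{A}^J_r$ yields that every $H^+$-eigenvalue of $\mathcal{A}^J_r$ is nonnegative. If $\mathcal{A}$ is strictly semi-positive, Lemma \ref{le:22}(vi) makes $\mathcal{A}^J_r$ strictly semi-positive, and Theorem \ref{t34}(i) gives that every $H^+$-eigenvalue of $\mathcal{A}^J_r$ is positive. Since $\mathcal{A}^J_r$ was an arbitrary principal sub-tensor, the asserted sign holds for all of them. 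Part (ii) is handled identically, replacing ``$H^+$'' by ``$Z^+$'' throughout and invoking Theorem \ref{t34}(iv) and (iii) in place of (ii) and (i), respectively.

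I do not expect a genuine obstacle here. The only point that deserves explicit mention is that Theorem \ref{t34} is stated for a tensor in terms of its \emph{own} eigenvalues, so the argument must apply it to the sub-tensor $\mathcal{A}^J_r$ only \emph{after} establishing that $\mathcal{A}^J_r$ is (strictly) semi-positive; that inheritance is precisely what Lemma \ref{le:22}(v)--(vi) supplies, which is why the corollary is flagged as an easy consequence of that lemma.
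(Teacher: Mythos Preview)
Your proposal is correct and matches the paper's intended argument exactly: the paper states only that the corollary follows easily from Lemma~\ref{le:22}, and your write-up spells out precisely that reduction---inherit (strict) semi-positivity to $\mathcal{A}^J_r$ via Lemma~\ref{le:22}(v)--(vi), then apply Theorem~\ref{t34} to the sub-tensor.
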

By Lemma \ref{le:21}, the following corollary is easy to be proved.
\begin{corollary} \label{c36}  Let $\mathcal{A} \in S_{m, n}$. Then
	\begin{itemize}
		\item[(i)]  all $H^+$-eigenvalues of each principal sub-tensor of $\mathcal{A}$ are nonnegative (positive) if $\mathcal{A}$ is a (strictly) copositive tensor;
		\item[(ii)] all $Z^+$-eigenvalues of each principal sub-tensor of $\mathcal{A}$ are nonnegative (positive) if $\mathcal{A}$ is a (strictly) copositive tensor.
	\end{itemize}
\end{corollary}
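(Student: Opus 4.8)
The plan is to reduce the symmetric copositive case to the semi-positive case that has already been settled in Corollary \ref{c35}. The only additional ingredient required is the equivalence, valid specifically for symmetric tensors, between copositivity and semi-positivity furnished by Lemma \ref{le:21}. Thus the entire argument is a two-step composition of facts already in hand, which is precisely why the remark preceding the statement says it is ``easy to be proved'' via Lemma \ref{le:21}.

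First I would invoke Lemma \ref{le:21}. Since $\mathcal{A} \in S_{m,n}$ is symmetric, that lemma asserts that $\mathcal{A}$ is copositive if and only if it is semi-positive, and strictly copositive if and only if it is strictly semi-positive. Hence, assuming $\mathcal{A}$ is (strictly) copositive, we immediately conclude that $\mathcal{A}$ is a (strictly) semi-positive tensor. Note that this is the step where the hypothesis $\mathcal{A}\in S_{m,n}$ is genuinely used.

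Next, with $\mathcal{A}$ now known to be (strictly) semi-positive, I would apply Corollary \ref{c35} verbatim. Part (i) of that corollary yields that all $H^+$-eigenvalues of each principal sub-tensor of $\mathcal{A}$ are nonnegative in the semi-positive case and positive in the strictly semi-positive case, which is exactly statement (i). Part (ii) gives the corresponding dichotomy for $Z^+$-eigenvalues, establishing statement (ii). No further computation is needed.

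The argument presents essentially no obstacle; the single point that deserves care is that the equivalence in Lemma \ref{le:21} between copositive and semi-positive tensors holds \emph{only} in the symmetric setting. For a general nonsymmetric tensor, copositivity need not imply semi-positivity, so the reduction would break down and Corollary \ref{c35} could not be brought to bear. This is why the symmetry assumption $\mathcal{A}\in S_{m,n}$ is indispensable here, in contrast to Corollary \ref{c35}, which is stated for arbitrary $\mathcal{A}\in T_{m,n}$.
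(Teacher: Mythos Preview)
Your proposal is correct and follows exactly the approach indicated by the paper: the remark preceding the corollary says it is ``easy to be proved'' from Lemma~\ref{le:21}, and the intended argument is precisely to use that lemma to pass from (strict) copositivity to (strict) semi-positivity in the symmetric case and then invoke Corollary~\ref{c35}. There is nothing to add.
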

\subsection{Upper bounds of $\beta(\mathcal{A})$}
The quantity $\beta(\mathcal{A})$ is in general not easy to compute. However, it is easy to derive some upper bounds for them when $\mathcal{A}$ is a strictly semi-positive tensor.  Next we will establish some smaller upper bounds. For this purpose, we introduce two quantities about a strictly semi-positive tensor $\mathcal{A}$:
\begin{equation}\label{eq:36}
\delta_{H^+}(\mathcal{A}) := \min\{\lambda_{H^+}(\mathcal{A}_r^J); J\subset I_n, r\in I_n\},
\end{equation}
where  $\lambda_{H^+}(\mathcal{A})$ denotes the smallest of $H^+$-eigenvalues (if any exists) of a strictly semi-positive tensor $\mathcal{A}$;
\begin{equation}\label{eq:37}
\delta_{Z^+}(\mathcal{A}) := \min\{\lambda_{Z^+}(\mathcal{A}_r^J); J\subset I_n, r\in I_n\},
\end{equation}
where  $\lambda_{Z^+}(\mathcal{A})$ denotes the smallest $Z^+$-eigenvalue (if any exists) of a strictly semi-positive tensor $\mathcal{A}$. The above two minimums range over those principal sub-tensors of $\mathcal{A}$ which indeed have $H^+$-eigenvalues ($Z^+$-eigenvalues).

It follows from Lemma \ref{le:22} (or Corollary \ref{c33}) and Corollary \ref{c35} that all principal diagonal entries of $\mathcal{A}$ are positive and all  $H^+(Z^+)$-eigenvalues of each principal sub-tensor of $\mathcal{A}$ are positive when $m$ is even. So $\delta_{H^+}(\mathcal{A})$ and $\delta_{Z^+}(\mathcal{A})$ are well defined, finite and positive.  Now we give some upper bounds of $\beta({\mathcal{A}})$ using the quantities $\delta_{H^+}(\mathcal{A})$ and $\delta_{Z^+}(\mathcal{A})$.

\begin{proposition}\label{p36}  Let $\mathcal{A} \in T_{m, n}$ ($m\geq2$) be a strictly semi-positive tensor.  Then
	\begin{equation}\label{eq:38}
	\delta_{H^+}(\mathcal{A})\leq \min_{i\in I_n}a_{ii\cdots i}\mbox{ and } \delta_{Z^+}(\mathcal{A})\leq \min_{i\in I_n}a_{ii\cdots i}.
	\end{equation}
\end{proposition}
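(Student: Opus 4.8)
The plan is to exploit the one-dimensional principal sub-tensors, namely those obtained by taking a singleton index set $J=\{i\}$ with $r=1$. Because the minima in \reff{eq:36} and \reff{eq:37} range over \emph{all} principal sub-tensors of $\mathcal{A}$ that possess $H^+$-eigenvalues (respectively $Z^+$-eigenvalues), it will suffice to compute the relevant eigenvalues of each $\mathcal{A}_1^{\{i\}}$ and then compare with the defining minimizations of $\delta_{H^+}(\mathcal{A})$ and $\delta_{Z^+}(\mathcal{A})$. Note that by Lemma \ref{le:22}(vi) each $\mathcal{A}_1^{\{i\}}$ is itself strictly semi-positive, so the notations $\lambda_{H^+}(\mathcal{A}_1^{\{i\}})$ and $\lambda_{Z^+}(\mathcal{A}_1^{\{i\}})$ are legitimate.

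First I would fix $i\in I_n$ and observe that the principal sub-tensor $\mathcal{A}_1^{\{i\}}$ is the one-dimensional tensor whose single entry is $a_{ii\cdots i}$. For a scalar variable $x$, the $H$-eigenvalue equation \reff{eig} reads $a_{ii\cdots i}x^{m-1}=\lambda x^{m-1}$, so every nonzero $x$ forces $\lambda=a_{ii\cdots i}$; choosing the nonnegative eigenvector $x=1>0$ shows that $a_{ii\cdots i}$ is an $H^+$-eigenvalue of $\mathcal{A}_1^{\{i\}}$, and it is the only one, so $\lambda_{H^+}(\mathcal{A}_1^{\{i\}})=a_{ii\cdots i}$. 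Likewise, the $Z$-eigenvalue equation \reff{eig1} becomes $a_{ii\cdots i}x^{m-1}=\lambda x$ with $x^2=1$; the nonnegativity requirement $x\geq 0$ forces $x=1$ and hence $\lambda=a_{ii\cdots i}$, so that $a_{ii\cdots i}$ is the unique $Z^+$-eigenvalue of $\mathcal{A}_1^{\{i\}}$ and $\lambda_{Z^+}(\mathcal{A}_1^{\{i\}})=a_{ii\cdots i}$.

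Next, since $\mathcal{A}_1^{\{i\}}$ is admissible in the minimizations \reff{eq:36} and \reff{eq:37} (it carries both an $H^+$- and a $Z^+$-eigenvalue), I would conclude
\[
\delta_{H^+}(\mathcal{A})\leq \lambda_{H^+}(\mathcal{A}_1^{\{i\}})=a_{ii\cdots i},\qquad \delta_{Z^+}(\mathcal{A})\leq \lambda_{Z^+}(\mathcal{A}_1^{\{i\}})=a_{ii\cdots i},
\]
for every $i\in I_n$. Taking the minimum over $i\in I_n$ on the right-hand sides then yields both inequalities of \reff{eq:38}. By Lemma \ref{le:22}(ii) each $a_{ii\cdots i}>0$, which is consistent with the positivity of $\delta_{H^+}(\mathcal{A})$ and $\delta_{Z^+}(\mathcal{A})$ noted before the statement.

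There is no serious obstacle here: the entire argument rests on the single, easily verified observation that the singleton principal sub-tensors realize the diagonal entries as their (unique) $H^+$- and $Z^+$-eigenvalues. The only point demanding a little care is to confirm that these one-dimensional sub-tensors genuinely belong to the family over which $\delta_{H^+}$ and $\delta_{Z^+}$ are defined — which they do, precisely because each of them does admit such eigenvalues — so that the comparison with $\lambda_{H^+}(\mathcal{A}_1^{\{i\}})$ and $\lambda_{Z^+}(\mathcal{A}_1^{\{i\}})$ is valid.
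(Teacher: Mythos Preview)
Your proposal is correct and follows essentially the same approach as the paper: both arguments observe that the one-dimensional principal sub-tensors $\mathcal{A}_1^{\{i\}}$ have $a_{ii\cdots i}$ as an $H^+$- (and $Z^+$-) eigenvalue, and then compare with the defining minimum to obtain \reff{eq:38}. Your version is in fact slightly more explicit in verifying the eigenvalue computations and the admissibility of the singleton sub-tensors, but the core idea is identical.
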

\begin{proof} It follows from Lemma \ref{le:22} (or Corollary \ref{c33}) that $a_{ii\cdots i}>0\mbox{ for all }i\in I_n.$ Since $\mathcal{A}^J_1=(a_{ii\cdots i})$ ($J=\{i\}$) is $m$-order 1-dimensional principal sub-tensor of $\mathcal{A}$,  $a_{ii\cdots i}$ is a H$^+$-eigenvalue of $\mathcal{A}^J_1$ for all $i\in I_n$, and hence
	$$\delta_{H^+}(\mathcal{A})\leq \min_{i\in I_n}a_{ii\cdots i}.$$
	Similarly, it is easy to prove the other inequality.	\end{proof}
\begin{theorem}\label{t36}  Let $\mathcal{A} \in T_{m, n}$ ($m\geq2$) be a strictly semi-positive tensor.  Then
	\begin{itemize}
		\item[(i)] $\beta({\mathcal{A}})\leq \delta_{H^+}(\mathcal{A});$
		\item[(ii)] $\beta({\mathcal{A}})\leq n^{\frac{m-2}2}\delta_{Z
			^+}(\mathcal{A})$ if $m$ is even.
	\end{itemize}
\end{theorem}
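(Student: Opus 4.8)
The plan is to reduce both inequalities to a statement about a single principal sub-tensor and then exploit the defining minimization of $\beta$ in \reff{Tbeta} by feeding in a suitable eigenvector as a feasible test vector. First I would recall from Theorem \ref{t31}(ii) that $\beta(\mathcal{A})\le\beta(\mathcal{A}_r^J)$ for every principal sub-tensor, and from Lemma \ref{le:22}(vi) that every $\mathcal{A}_r^J$ is again strictly semi-positive, so each $\beta(\mathcal{A}_r^J)$ is well defined. Since $\delta_{H^+}(\mathcal{A})$ and $\delta_{Z^+}(\mathcal{A})$ are minima of eigenvalues taken over those sub-tensors possessing the relevant eigenvalues, it suffices to prove, for a fixed sub-tensor $\mathcal{B}=\mathcal{A}_r^J$, that $\beta(\mathcal{B})\le\lambda$ for every $H^+$-eigenvalue $\lambda$ of $\mathcal{B}$ (for (i)) and that $\beta(\mathcal{B})\le n^{\frac{m-2}{2}}\lambda$ for every $Z^+$-eigenvalue $\lambda$ of $\mathcal{B}$ (for (ii)); taking minima over all admissible $\mathcal{B}$ and $\lambda$ then yields the two bounds.

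For (i), let $\lambda$ be an $H^+$-eigenvalue of $\mathcal{B}$ with eigenvector $\y\ge\0$, $\y\ne\0$. Because the eigenvalue relation $\mathcal{B}\y^{m-1}=\lambda\y^{[m-1]}$ is homogeneous, I may rescale $\y$ so that $\|\y\|_\infty=1$. Using this $\y$ as a test vector in \reff{Tbeta} gives $\beta(\mathcal{B})\le\max_i y_i(\mathcal{B}\y^{m-1})_i=\lambda\max_i y_i^m=\lambda$, where the last equality uses $\y\ge\0$ and $\max_i y_i=\|\y\|_\infty=1$. This step needs no parity assumption on $m$, which is why (i) holds for all $m\ge2$.

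For (ii), with $m$ even, let $\lambda$ be a $Z^+$-eigenvalue of $\mathcal{B}$ with $Z$-eigenvector $\y\ge\0$, $\|\y\|_2=1$, so that $\mathcal{B}\y^{m-1}=\lambda\y$. Here the complication is the mismatch between the $2$-norm normalization of a $Z$-eigenvector and the $\infty$-norm used in \reff{Tbeta}. To reconcile them I set $\z=\y/\|\y\|_\infty$, so $\|\z\|_\infty=1$ and $\z\ge\0$, and compute by positive homogeneity that $z_i(\mathcal{B}\z^{m-1})_i=\lambda\,y_i^2\,\|\y\|_\infty^{-m}$; maximizing over $i$ (again using $\y\ge\0$) yields $\beta(\mathcal{B})\le\lambda\,\|\y\|_\infty^{2-m}$. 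Since $m$ is even and $\mathcal{B}$ is strictly semi-positive, Theorem \ref{t34}(iii) guarantees $\lambda>0$, so the inequality is preserved when I bound the factor $\|\y\|_\infty^{2-m}$. Finally, the elementary estimate $1=\|\y\|_2\le\sqrt{r}\,\|\y\|_\infty$ gives $\|\y\|_\infty\ge r^{-1/2}$, hence $\|\y\|_\infty^{2-m}\le r^{\frac{m-2}{2}}\le n^{\frac{m-2}{2}}$, using $r\le n$ and $m-2\ge0$. Combining, $\beta(\mathcal{B})\le n^{\frac{m-2}{2}}\lambda$.

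The main obstacle is the second part: carefully tracking the homogeneity factor $\|\y\|_\infty^{2-m}$ produced by rescaling the $Z$-eigenvector to $\infty$-norm one, and checking that the positivity of $\lambda$ (from Theorem \ref{t34}(iii), valid for even $m$) is exactly what lets me replace that factor by its upper bound $n^{(m-2)/2}$ without reversing the inequality. Part (i) is comparatively routine, since the $\infty$-normalization of an $H^+$-eigenvector is already compatible with the norm in the definition of $\beta$.
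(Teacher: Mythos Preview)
Your argument is correct, and it takes a genuinely different route from the paper's proof. The paper argues indirectly: for (i) it forms $\mathcal{B}=\mathcal{A}-\delta\mathcal{I}$ with $\delta=\delta_{H^+}(\mathcal{A})$, uses the $H^+$-eigenvector of the relevant sub-tensor to show that a principal sub-tensor of $\mathcal{B}$ fails to be strictly semi-positive, invokes Lemma~\ref{le:22}(vi) to conclude that $\mathcal{B}$ itself is not strictly semi-positive, and then applies Theorem~\ref{t32}(i) to get $\beta(\mathcal{B})\le0$, which unwinds to $\beta(\mathcal{A})\le\delta$. Part (ii) is handled the same way with $\mathcal{I}$ replaced by the tensor $\mathcal{E}=I_2^{m/2}$ satisfying $\mathcal{E}\x^{m-1}=\|\x\|_2^{m-2}\x$; this construction is where the parity restriction enters in the paper. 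Your approach is more elementary: you reduce to a single sub-tensor via Theorem~\ref{t31}(ii) and then plug the (rescaled) eigenvector directly into the definition \reff{Tbeta} of $\beta$ as a test vector, avoiding the auxiliary tensors and the contrapositive detour through Theorem~\ref{t32}. A small bonus of your method is that your proof of (ii) does not actually use the evenness of $m$ anywhere essential: Theorem~\ref{t34}(iii) has no parity hypothesis, and the norm estimate $\|\y\|_\infty^{2-m}\le n^{(m-2)/2}$ only needs $m\ge2$, so your argument in fact establishes (ii) for all $m\ge2$.
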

\begin{proof}
	(i) Let $\delta=\delta_{H^+}(\mathcal{A})$ and $\mathcal{B}=\mathcal{A}-\delta\mathcal{I}$, where $\mathcal{I}$ is unit tensor.  Then it follows from the definition of $\delta_{H^+}(\mathcal{A})$ that $\delta$ is  an $H^+$-eigenvalue of a principal sub-tensor $\mathcal{A}^J_r$ of $\mathcal{A}$. Then there exists $\x_J\in \mathbb{R}^r_+\setminus\{\0\}$ such that  $$\left(\mathcal{A}^J_r-\delta\mathcal{I}^J_r\right)(\x_J)^{m-1}=\mathcal{A}^J_r(\x_J)^{m-1}-\delta(\x_J)^{[m-1]}=\0.$$
	So the principal sub-tensor $\mathcal{B}^J_r=\mathcal{A}^J_r-\delta\mathcal{I}^J_r$ of $\mathcal{B}$ is not a strictly semi-positive tensor. Thus  it follows from Lemma \ref{le:22}  that $\mathcal{B}=\mathcal{A}-\delta\mathcal{I}$ is not  strictly semi-positive. From Theorem \ref{t32} (i), it follows that $\beta(\mathcal{B})\leq0,$ and hence, by the definition of $\beta(\mathcal{B})$,
	there exists a vector $\y$ with $\|\y\|_\infty=1$ such that
	$$\max_{i\in I_n}y_i(\mathcal{B}\y^{m-1})_i=\max_{i\in I_n}\left(y_i(\mathcal{A}\y^{m-1})_i-\delta y_i^{m}\right)\leq 0.$$
	So, we have $$y_k(\mathcal{A}\y^{m-1})_k-\delta y_k^{m}\leq\max_{i\in I_n}\left(y_i(\mathcal{A}\y^{m-1})_i-\delta y_i^{m}\right)\leq 0 \mbox{ for all }k\in I_n,$$
	and so, $y_k(\mathcal{A}\y^{m-1})_k\leq\delta y_k^{m} \mbox{ for all }k\in I_n.$ Thus,
	$$\max_{i\in I_n}y_i(\mathcal{A}\y^{m-1})_i\leq\delta\max_{i\in I_n} y_i^{m}\leq \delta \|\y\|_\infty^m=\delta.$$
	This implies that $\beta({\mathcal{A}})\leq \delta_{H^+}(\mathcal{A})$.
	
	(ii)  Let $\gamma=\delta_{Z^+}(\mathcal{A})$ and $\mathcal{B}=\mathcal{A}-\gamma\mathcal{E}$, where $\mathcal{E}=I^{\frac{m}2}_2$ and $I_2$ is $n\times n$ unit matrix ($\mathcal{E}\x^{m-1}=\|\x\|^{m-2}_2\x$, see Chang, Pearson, Zhang \cite{CPZ09}).  Then $\gamma$ is  a Z$^+$-eigenvalue of a principal sub-tensor $\mathcal{A}^J_r$ of $\mathcal{A}$. Then there exists $\z_J\in \mathbb{R}^r_+\setminus\{\0\}$ such that $(\z_J)^\top\z_J=1$ and $$\left(\mathcal{A}^J_r-\gamma\mathcal{E}^J_r\right)(\z_J)^{m-1}=\mathcal{A}^J_r(\z_J)^{m-1}-\gamma\z_J=\0.$$ So the principal sub-tensor $\mathcal{B}^J_r=\mathcal{A}^J_r-\gamma\mathcal{E}^J_r$ of $\mathcal{B}$ is not strictly semi-positive. Thus it follows from Lemma \ref{le:22} that $\mathcal{B}=\mathcal{A}-\gamma\mathcal{E}$ is not strictly semi-positive. From Theorem \ref{t32} (i), it follows that $\beta(\mathcal{B})\leq0.$
	So, there exists a vector $\y$ with $\|\y\|_\infty=1$ such that
	$$\max_{i\in I_n}y_i(\mathcal{B}\y^{m-1})_i=\max_{i\in I_n}\left(y_i(\mathcal{A}\y^{m-1})_i-\gamma\|\y\|_2^{m-2} y_i^2\right)\leq 0.$$
	Thus, using the similar proof technique of (i) and $\|\y\|_2\leq \sqrt{n}\|\y\|_\infty$, we have
	$$\max_{i\in I_n}y_i(\mathcal{A}\y^{m-1})_i\leq\gamma \|\y\|^{m-2}_2 \|\y\|_\infty^2=n^{\frac{m-2}2}\gamma\|\y\|_\infty^m=n^{\frac{m-2}2}\gamma.$$
	The desired inequality follows.
\end{proof}
From Lemma \ref{le:21} and Theorem \ref{t36}, the following corollary  follows.
\begin{corollary}\label{c310}  Let $\mathcal{A} \in S_{m, n}$ ($m\geq2$) be strictly copositive.  Then
	\begin{itemize}
		\item[(i)] $\beta({\mathcal{A}})\leq \delta_{H^+}(\mathcal{A})\leq \min_{i\in I_n}a_{ii\cdots i};$
		\item[(ii)] $\beta({\mathcal{A}})\leq n^{\frac{m-2}2}\delta_{Z
			^+}(\mathcal{A})$ if $m$ is even.
	\end{itemize}
\end{corollary}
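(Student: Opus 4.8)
The plan is to reduce the statement entirely to results already established, the only genuine new ingredient being the symmetry hypothesis. The estimates in Theorem \ref{t36} and Proposition \ref{p36} are phrased for (strictly) \emph{semi-positive} tensors, whereas here the hypothesis is strict \emph{copositivity}. The bridge is Lemma \ref{le:21}, which asserts that for a symmetric tensor $\mathcal{A}\in S_{m,n}$ strict copositivity and strict semi-positivity coincide. So the first step is to invoke Lemma \ref{le:21} to conclude that the strictly copositive tensor $\mathcal{A}$ is in fact strictly semi-positive; this single observation unlocks all the subsequent estimates.

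Once $\mathcal{A}$ is known to be strictly semi-positive, part (i) follows by chaining two earlier results. Theorem \ref{t36}(i) gives $\beta(\mathcal{A})\leq\delta_{H^+}(\mathcal{A})$, and Proposition \ref{p36} supplies $\delta_{H^+}(\mathcal{A})\leq\min_{i\in I_n}a_{ii\cdots i}$; concatenating these yields the full chain $\beta(\mathcal{A})\leq\delta_{H^+}(\mathcal{A})\leq\min_{i\in I_n}a_{ii\cdots i}$ claimed in (i). Note that no parity restriction on $m$ is needed here, since both Theorem \ref{t36}(i) and Proposition \ref{p36} hold for all $m\geq 2$, and $\delta_{H^+}(\mathcal{A})$ is well defined because the one-dimensional principal sub-tensors $\mathcal{A}^J_1=(a_{ii\cdots i})$ always carry the diagonal entries as $H^+$-eigenvalues.

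Part (ii) is then immediate from Theorem \ref{t36}(ii): under the extra assumption that $m$ is even, the strict semi-positivity established in the first step gives $\beta(\mathcal{A})\leq n^{\frac{m-2}{2}}\delta_{Z^+}(\mathcal{A})$. For this case one should record, as in the discussion following \reff{eq:36}--\reff{eq:37}, that with $m$ even the quantities $\delta_{H^+}(\mathcal{A})$ and $\delta_{Z^+}(\mathcal{A})$ are well defined, finite and positive, so that the right-hand sides of both inequalities are meaningful.

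I do not anticipate any real obstacle: the entire content is carried by Theorem \ref{t36} and Proposition \ref{p36}, and the only thing to verify carefully is that the symmetry assumption legitimately converts the copositivity hypothesis into the semi-positivity hypothesis those results require. The closest thing to a subtlety is purely bookkeeping, namely checking that the relevant eigenvalue sets are nonempty so that the minima defining $\delta_{H^+}(\mathcal{A})$ and $\delta_{Z^+}(\mathcal{A})$ exist, which is handled by the diagonal one-dimensional sub-tensors.
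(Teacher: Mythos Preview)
Your proposal is correct and follows essentially the same route as the paper: invoke Lemma~\ref{le:21} to pass from strict copositivity to strict semi-positivity, then read off the bounds from Theorem~\ref{t36} (and Proposition~\ref{p36} for the second inequality in (i)). If anything, you are slightly more explicit than the paper's one-line citation, which mentions only Lemma~\ref{le:21} and Theorem~\ref{t36} and leaves the appeal to Proposition~\ref{p36} implicit.
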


\begin{question}\label{Q1}  It is known from  Theorem \ref{t36} that for  a strictly semi-positive tensor $\mathcal{A}$,
	$$\min_{i\in I_n}a_{ii\cdots i}\geq \delta_{H^+}(\mathcal{A})\geq\beta({\mathcal{A}})>0.$$
	Then we have the following questions for further research. \begin{itemize}
		\item[(i)]  Does the constant $\beta({\mathcal{A}})$ have a strictly positive lower bound?
		\item[(ii)]  Are the above upper bounds the best?
	\end{itemize}
\end{question}

\subsection{Boundedness of solution set of TCP$(\mathcal{A},\q)$}

Song and Qi \cite{SQ13} introduced the concept of Pareto $H(Z)$-eigenvalues and used it to portray the (strictly) copositive tensor. The number and computation of Pareto $H(Z)$-eigenvalue see Ling, He and Qi \cite{LHQ2015,LHQ2016}, Chen, Yang and Ye \cite{CYY}.

\begin{definition} \label{d31}
	Let $\mathcal{A}  = (a_{i_1\cdots i_m}) \in T_{m, n}$.   A real number $\mu$ is said to be
	\begin{itemize}
		\item[(i)]a {\bf Pareto $H$-eigenvalue} of $\mathcal{A}$ iff there is a non-zero vector $\x\in \mathbb{R}^n$ satisfying \begin{equation}\label{eq:41} \mathcal{A}\x^m=\mu \x^\top\x^{[m-1]}, \
		\mathcal{A}\x^{m-1}-\mu \x^{[m-1]} \geq0,\ \x\geq 0,\end{equation}
		where $\x^{[m-1]}=(x_1^{m-1},x_2^{m-1},\cdots,x_n^{m-1})^\top$.
		\item[(ii)] a {\bf Pareto $Z$-eigenvalue} of $\mathcal{A}$ iff there is a non-zero vector $\x\in \mathbb{R}^n$ satisfying \begin{equation}\label{eq:42}  \mathcal{A}\x^m=\mu (\x^\top\x)^{\frac{m}2}, \
		\mathcal{A}\x^{m-1}-\mu (\x^\top\x)^{\frac{m}2-1}\x \geq0,\  \x\geq 0. \end{equation}
	\end{itemize}
\end{definition}

Let $$\lambda(\mathcal{A})=\min\{\lambda; \lambda \mbox{ is Pareto H-eigenvalue of  }\mathcal{A}\}$$ and
$$\mu(\mathcal{A})=\min\{\mu; \mu \mbox{ is Pareto Z-eigenvalue of  }\mathcal{A}\}.$$ Song and Yu \cite{SY15} obtained the following upper bounds of solution set of TCP$(\mathcal{A},\q)$.

\begin{lemma}\label{le:31}  (Song and Yu \cite[Theorems 3.3,3.4,3.5]{SY15}) Let $\mathcal{A}  = (a_{i_1\cdots i_m}) \in T_{m,n}$ be strictly semi-positive and let $\x$ be a solution of the {\em TCP}$(\mathcal{A}, \q)$.  Then\begin{itemize}
		\item[(i)]
		$\|\x\|_\infty^{m-1}\leq\frac{\|(-\q)_+\|_\infty}{\beta(\mathcal{A})}$;
		\item[(ii)] $\|\x\|_2^{m-1}\leq\frac{\|(-\q)_+\|_2}{\mu(\mathcal{A})}$ if $\mathcal{A}$ is symmetric;
		\item[(iii)] $\|\x\|_m^{m-1}\leq\frac{\|(-\q)_+\|_\frac{m}{m-1}}{\lambda(\mathcal{A})}$ if $\mathcal{A}$ is symmetric,
	\end{itemize}
	where $\x_+=(\max\{x_1,0\},\max\{x_2,0\},\cdots,\max\{x_n,0\})^\top$.
\end{lemma}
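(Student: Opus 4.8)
The plan is to start from the complementarity structure of a solution and turn it into scalar identities coupling $\mathcal{A}\x^m$ with $\q$. Since $\x\geq\0$, $\q+\mathcal{A}\x^{m-1}\geq\0$, and $\x^\top(\q+\mathcal{A}\x^{m-1})=0$, the componentwise complementarity gives $x_i(\mathcal{A}\x^{m-1})_i=-x_iq_i$ for every $i\in I_n$, and summation yields $\mathcal{A}\x^m=-\q^\top\x$. Because $x_i\geq0$ and $-q_i\leq((-\q)_+)_i$, each term obeys $-x_iq_i\leq x_i((-\q)_+)_i$. The case $\x=\0$ makes all three claimed inequalities trivial, so throughout I assume $\x\neq\0$ and divide by the relevant positive norm only at the final step.

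For part (i), I would feed the normalized vector $\y=\x/\|\x\|_\infty$ into the definition of $\beta(\mathcal{A})$ in \reff{Tbeta}. Since the map $\x\mapsto\max_{i}x_i(\mathcal{A}\x^{m-1})_i$ is positively homogeneous of degree $m$, one gets $\max_i x_i(\mathcal{A}\x^{m-1})_i=\|\x\|_\infty^m\max_i y_i(\mathcal{A}\y^{m-1})_i\geq\beta(\mathcal{A})\|\x\|_\infty^m$, using that $\y$ is admissible in the minimization defining $\beta(\mathcal{A})$. On the other hand, the complementarity identity and the componentwise bound give $\max_i x_i(\mathcal{A}\x^{m-1})_i=\max_i(-x_iq_i)\leq\|\x\|_\infty\|(-\q)_+\|_\infty$. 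Chaining the two inequalities and cancelling one factor of $\|\x\|_\infty$ yields $\beta(\mathcal{A})\|\x\|_\infty^{m-1}\leq\|(-\q)_+\|_\infty$; since $\mathcal{A}$ is strictly semi-positive, $\beta(\mathcal{A})>0$ by Theorem \ref{t32}(i), so the division is legitimate and (i) follows.

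For parts (ii) and (iii), I would first record the variational characterizations of the extreme Pareto eigenvalues in the symmetric case: $\mathcal{A}\x^m\geq\mu(\mathcal{A})\|\x\|_2^m$ and $\mathcal{A}\x^m\geq\lambda(\mathcal{A})\|\x\|_m^m$ for all $\x\geq\0$, where $\mu(\mathcal{A}),\lambda(\mathcal{A})>0$ because strict semi-positivity coincides with strict copositivity for symmetric tensors by Lemma \ref{le:21}. Granting these, I combine the complementarity identity $\mathcal{A}\x^m=-\q^\top\x\leq\sum_i((-\q)_+)_ix_i$ with the Cauchy--Schwarz inequality (for (ii)) and with H\"older's inequality for the conjugate exponents $m$ and $\tfrac{m}{m-1}$ (for (iii)), obtaining $\mathcal{A}\x^m\leq\|(-\q)_+\|_2\|\x\|_2$ and $\mathcal{A}\x^m\leq\|(-\q)_+\|_{\frac{m}{m-1}}\|\x\|_m$ respectively. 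Substituting the lower bounds for $\mathcal{A}\x^m$ and cancelling one factor of the relevant norm gives (ii) and (iii).

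The main obstacle is the variational characterization underpinning (ii) and (iii): that $\mu(\mathcal{A})$ (resp. $\lambda(\mathcal{A})$) equals the minimum of $\mathcal{A}\x^m$ over $\{\x\geq\0:\|\x\|_2=1\}$ (resp. $\{\x\geq\0:\|\x\|_m=1\}$). Establishing this needs three ingredients: existence of a minimizer by compactness of the nonnegative sphere; a KKT computation showing that the stationarity, dual feasibility, and complementary slackness conditions of the constrained minimization reproduce exactly the defining system of a Pareto $Z$- (resp. $H$-) eigenpair in Definition \ref{d31}, so the minimal value is a Pareto eigenvalue; and positivity of that minimum from strict copositivity. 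Once this correspondence is in hand, the remaining estimates are one-line applications of H\"older-type inequalities, so the eigenvalue-minimization identity is where the real work lies; I would either cite it from \cite{SQ13} or reprove it via the KKT argument just sketched.
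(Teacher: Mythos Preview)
Your proof sketch is correct. Note, however, that the paper does not prove this lemma at all: it is quoted verbatim from Song and Yu \cite{SY15}, so there is no in-paper argument to compare against. Your approach---extracting the componentwise identity $x_i(\mathcal{A}\x^{m-1})_i=-x_iq_i$ from complementarity, then combining the homogeneity of $\beta(\mathcal{A})$ (for (i)) or the variational characterization of the least Pareto $Z$/$H$-eigenvalue (for (ii)--(iii)) with H\"older-type bounds on $-\q^\top\x$---is exactly the natural route and is what the cited source does. You are right that the only nontrivial ingredient is the identification of $\mu(\mathcal{A})$ and $\lambda(\mathcal{A})$ with the constrained minima of $\mathcal{A}\x^m$ over the nonnegative unit $\ell_2$- and $\ell_m$-spheres, and your plan to obtain this either by citing \cite{SQ13} or by a direct KKT computation is sound.
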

Now we present lower bounds  of the solution set of TCP$(\mathcal{A},\q)$ when  $\mathcal{A} \in T_{m,n}$ is strictly semi-positive.
\begin{theorem}\label{t38}  Let $\mathcal{A} \in T_{m, n}$ ($m\geq2$) be strictly semi-positive, and let $\x$ be a solution of TCP$(\mathcal{A},\q)$.  Then\begin{itemize}
		\item[(i)] $\frac{\|(-\q)_+\|_\infty}{ n^{\frac{m-2}2}\|T_{\mathcal{A}}\|_\infty}\leq\|\x\|_\infty^{m-1}$;
		\item[(ii)] $\frac{\|(-\q)_+\|_\infty}{ \|F_{\mathcal{A}}\|_\infty^{m-1}}\leq\|\x\|_\infty^{m-1}$ if $m$ is even;
		\item[(iii)] $\frac{\|(-\q)_+\|_2}{ \|T_{\mathcal{A}}\|_2}\leq\|\x\|_2^{m-1}$;
		\item[(iv)] $\frac{\|(-\q)_+\|_m}{ \|F_{\mathcal{A}}\|_m^{m-1}}\leq\|\x\|_m^{m-1}$if $m$ is even.
	\end{itemize}
\end{theorem}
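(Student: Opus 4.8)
The common engine behind all four bounds is a single componentwise feasibility inequality, so the plan is to isolate that first and then feed it into the operator-norm inequalities of Lemma \ref{le:23}. Since $\x$ solves TCP$(\mathcal{A},\q)$, feasibility gives $\q+\mathcal{A}\x^{m-1}\geq\0$, i.e.\ $(\mathcal{A}\x^{m-1})_i\geq -q_i$ for every $i\in I_n$. I would then verify, index by index, that $((-\q)_+)_i\leq|(\mathcal{A}\x^{m-1})_i|$: when $-q_i>0$ this reads $-q_i\leq(\mathcal{A}\x^{m-1})_i$, and here the right-hand side is already positive so it equals its own absolute value; when $-q_i\leq0$ the left-hand side is $0$. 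Observe that only feasibility is used here — strict semi-positivity enters solely through Lemma \ref{le:25}, to guarantee that a solution $\x$ exists (otherwise the statement is vacuous), and the degenerate case $\x=\0$ forces $\q\geq\0$, whence $(-\q)_+=\0$ and every inequality below is trivial.

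For (i) and (iii), because $t\mapsto|t|$ acts componentwise the inequality above upgrades to $\|(-\q)_+\|_\infty\leq\|\mathcal{A}\x^{m-1}\|_\infty$ and $\|(-\q)_+\|_2\leq\|\mathcal{A}\x^{m-1}\|_2$. I would then rewrite $\mathcal{A}\x^{m-1}=\|\x\|_2^{m-2}T_\mathcal{A}(\x)$ via the definition \reff{TA} and apply Lemma \ref{le:23}(ii). For the $\infty$-norm this yields $\|\mathcal{A}\x^{m-1}\|_\infty\leq\|\x\|_2^{m-2}\|T_\mathcal{A}\|_\infty\|\x\|_\infty$, and combining with $\|\x\|_2\leq\sqrt{n}\|\x\|_\infty$ produces the factor $n^{\frac{m-2}2}$ needed for (i); for the $2$-norm it gives $\|\mathcal{A}\x^{m-1}\|_2\leq\|T_\mathcal{A}\|_2\|\x\|_2^{m-1}$ directly, which is (iii) after dividing. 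For (ii), with $m$ even the operator $F_\mathcal{A}$ of \reff{FA} is well defined and $(\mathcal{A}\x^{m-1})_i=(F_\mathcal{A}(\x))_i^{m-1}$, so $\|\mathcal{A}\x^{m-1}\|_\infty=\|F_\mathcal{A}(\x)\|_\infty^{m-1}$; Lemma \ref{le:23}(i) bounds this by $\|F_\mathcal{A}\|_\infty^{m-1}\|\x\|_\infty^{m-1}$, giving (ii).

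The delicate case is (iv). Here the clean componentwise statement is $((-\q)_+)_i\leq|(F_\mathcal{A}(\x))_i|^{m-1}$ (again using that $m$ even makes the odd root behave), and raising to the $m$th power and summing gives $\|(-\q)_+\|_m^m\leq\sum_{i}|(F_\mathcal{A}(\x))_i|^{m(m-1)}$. The subtlety is that the right-hand side is not yet the correct power of $\|F_\mathcal{A}(\x)\|_m$, so I would invoke the monotonicity of $\ell^p$-norms: writing $s_i=|(F_\mathcal{A}(\x))_i|^m\geq0$, the elementary inequality $\sum_{i}s_i^{m-1}\leq\left(\sum_{i}s_i\right)^{m-1}$, valid since $m-1\geq1$, yields exactly $\|(-\q)_+\|_m\leq\|F_\mathcal{A}(\x)\|_m^{m-1}$. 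A final application of Lemma \ref{le:23}(i) replaces $\|F_\mathcal{A}(\x)\|_m$ by $\|F_\mathcal{A}\|_m\|\x\|_m$ and produces (iv).

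This last $\ell^p$-comparison is the one place demanding a little care beyond bookkeeping; once it is in hand, everything reduces to substituting the componentwise feasibility bound into the norm estimates of Lemma \ref{le:23}, and the four inequalities follow in parallel.
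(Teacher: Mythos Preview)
Your proof is correct and follows essentially the same route as the paper: establish the componentwise inequality $((-\q)_+)_i\leq|(\mathcal{A}\x^{m-1})_i|$ from feasibility, dispose of the degenerate case $\x=\0$, and then push through the operator-norm bounds of Lemma~\ref{le:23}. For part~(iv) the paper phrases the key step as the $\ell^p$-monotonicity $\|\mathcal{A}\x^{m-1}\|_m\leq\|\mathcal{A}\x^{m-1}\|_{m/(m-1)}=\|F_\mathcal{A}(\x)\|_m^{m-1}$, which after the substitution $s_i=|(F_\mathcal{A}(\x))_i|^m$ is exactly your inequality $\sum_i s_i^{m-1}\leq(\sum_i s_i)^{m-1}$, so the two arguments are the same in content.
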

\begin{proof}  For $\q\geq\0$, it follows from Lemma \ref{le:25} that $\x=\0$. Since $\|(-\q)_+\|=0$, the conclusion holds obviously. Therefore, we may assume that $\x\ne \0$, or equivalently, that $\q$ is not nonnegative. Since $\x$ is a solution of TCP$(\mathcal{A},\q)$, we have \begin{equation} \label{eq:311}
	\x \geq \0, \q + \mathcal{A}\x^{m-1} \geq \0, \mbox{ and }\x^\top (\q + \mathcal{A}\x^{m-1}) = 0,
	\end{equation}
	and hence, $$(\mathcal{A} \x^{m-1})_i\geq (-\q)_i \mbox{ for all } i\in I_n.$$
	In particular, $$|(\mathcal{A} \x^{m-1})_i|\geq((\mathcal{A} \x^{m-1})_+)_i\geq ((-\q)_+)_i\mbox{ for all } i\in I_n.$$	
	Thus,\begin{equation} \label{eq:312}\|\mathcal{A} \x^{m-1}\|_\infty\geq\|(-\q)_+\|_\infty.\end{equation}
	By the above inequality together with Lemma \ref{le:23}, we have  \begin{align}
	\|(-\q)_+\|_\infty\leq& \|\x\|_2^{m-2}\|\|\x\|_2^{2-m}\mathcal{A} \x^{m-1}\|_\infty\nonumber\\
	=&\|\x\|_2^{m-2}\|T_{\mathcal{A}}(\x)\|_\infty\nonumber\\
	\leq& \|\x\|_2^{m-2}\|\x\|_\infty\|T_{\mathcal{A}}\|_\infty\nonumber\\
	\leq& n^{\frac{m-2}2}\|\x\|_\infty^{m-1}\|T_{\mathcal{A}}\|_\infty\mbox{\ (use $\|\x\|_2\leq \sqrt{n}\|\x\|_\infty$)}.\nonumber
	\end{align}
	This prove (i).  Next we show (ii). Similarly, using Lemma \ref{le:23} and (\ref{eq:312}), we also have
	\begin{align}
	\|(-\q)_+\|_\infty\leq& \|(\mathcal{A} \x^{m-1})^{[\frac1{m-1}]}\|_\infty^{m-1}\nonumber\\
	=&\|F_{\mathcal{A}}(\x)\|_\infty^{m-1}\nonumber\\
	\leq& \|\x\|_\infty^{m-1}\|F_{\mathcal{A}}\|_\infty^{m-1}.\nonumber
	\end{align}
	(iii) It follows from (\ref{eq:312}) and Lemma \ref{le:23} that
	\begin{align}
	\|(-\q)_+\|_2\leq&\|\mathcal{A} \x^{m-1}\|_2= \|\x\|_2^{m-2}\|\|\x\|_2^{2-m}\mathcal{A} \x^{m-1}\|_2\nonumber\\
	=&\|\x\|_2^{m-2}\|T_{\mathcal{A}}(\x)\|_2\nonumber\\
	\leq& \|\x\|_2^{m-2}\|\x\|_2\|T_{\mathcal{A}}\|_2\nonumber\\
	\leq& \|\x\|_2^{m-1}\|T_{\mathcal{A}}\|_2.\nonumber
	\end{align} So,(iii) is proved. Now we show (iv).  It follows from (\ref{eq:312}) and  Lemma \ref{le:23} that \begin{align}
	\|(-\q)_+\|_m\leq&\|\mathcal{A} \x^{m-1}\|_m\leq\|\mathcal{A} \x^{m-1}\|_\frac{m}{m-1}= \|(\mathcal{A} \x^{m-1})^{[\frac1{m-1}]}\|_m^{m-1}\nonumber\\
	=&\|F_{\mathcal{A}}(\x)\|_m^{m-1}\nonumber\\
	\leq& \|\x\|_m^{m-1}\|F_{\mathcal{A}}\|_m^{m-1}.\nonumber
	\end{align}
	The desired inequality follows.
\end{proof}
Combining Lemmas \ref{le:23} and \ref{le:31} with Theorem \ref{t38}, the following theorems are easily proved.

\begin{theorem}\label{t39}  Let $\mathcal{A} \in T_{m, n}$ ($m\geq2$) be strictly semi-positive, and let $\x$ be a solution of TCP$(\mathcal{A},\q)$.  Then
	\begin{equation}\label{eq:313}
	\frac{\|(-\q)_+\|_\infty}{ n^{\frac{m-2}{2}}\max\limits_{i\in I_n}\left(\sum\limits_{i_2,\cdots,i_m=1}^{n}|a_{ii_2\cdots i_m}|\right)}\leq\|\x\|_\infty^{m-1}\leq \frac{\|(-\q)_+\|_\infty}{\beta({\mathcal{A}})}.
	\end{equation}
\end{theorem}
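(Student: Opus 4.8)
The goal is to prove Theorem~\ref{t39}, which sandwiches $\|\x\|_\infty^{m-1}$ between an explicit lower bound and the bound $\|(-\q)_+\|_\infty/\beta(\mathcal{A})$ for any solution $\x$ of TCP$(\mathcal{A},\q)$. Since the statement itself remarks that it follows by ``combining Lemmas~\ref{le:23} and \ref{le:31} with Theorem~\ref{t38},'' the plan is essentially to assemble the two halves of the inequality from results already in hand and then tighten the denominator of the lower bound using the explicit operator-norm estimate.

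\textbf{The upper bound.} The right-hand inequality $\|\x\|_\infty^{m-1}\leq \|(-\q)_+\|_\infty/\beta(\mathcal{A})$ is exactly Lemma~\ref{le:31}(i), so no work is required beyond citing it. Strict semi-positivity guarantees $\beta(\mathcal{A})>0$ by Theorem~\ref{t32}(i), so the quotient is well defined and finite.

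\textbf{The lower bound.} For the left-hand inequality I would start from Theorem~\ref{t38}(i), which already gives
$$\frac{\|(-\q)_+\|_\infty}{n^{\frac{m-2}{2}}\|T_{\mathcal{A}}\|_\infty}\leq\|\x\|_\infty^{m-1}.$$
The only remaining task is to replace $\|T_{\mathcal{A}}\|_\infty$ by the concrete row-sum quantity $\max_{i\in I_n}\bigl(\sum_{i_2,\cdots,i_m=1}^n|a_{ii_2\cdots i_m}|\bigr)$. By Lemma~\ref{le:23}(iii) we have $\|T_{\mathcal{A}}\|_\infty\leq\max_{i\in I_n}\bigl(\sum_{i_2,\cdots,i_m=1}^n|a_{ii_2\cdots i_m}|\bigr)$; since the operator norm sits in the denominator, enlarging it only decreases the fraction, so the chain
$$\frac{\|(-\q)_+\|_\infty}{n^{\frac{m-2}{2}}\max\limits_{i\in I_n}\left(\sum\limits_{i_2,\cdots,i_m=1}^n|a_{ii_2\cdots i_m}|\right)}\leq\frac{\|(-\q)_+\|_\infty}{n^{\frac{m-2}{2}}\|T_{\mathcal{A}}\|_\infty}\leq\|\x\|_\infty^{m-1}$$
delivers the stated lower bound. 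Concatenating this with the upper bound from Lemma~\ref{le:31}(i) yields \eqref{eq:313}.

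\textbf{Where the difficulty really lies.} Honestly, no serious obstacle remains at this stage, because all the substance was discharged earlier: the genuinely nontrivial inputs are Theorem~\ref{t38}(i) (obtained from the complementarity condition $\x^\top(\q+\mathcal{A}\x^{m-1})=0$, which forces $(\mathcal{A}\x^{m-1})_i\geq(-\q)_i$ and hence $\|\mathcal{A}\x^{m-1}\|_\infty\geq\|(-\q)_+\|_\infty$, combined with the positive homogeneity estimate for $T_{\mathcal{A}}$) and Lemma~\ref{le:31}(i). The one point that warrants a sentence of care is the degenerate case $\q\geq\0$: then Lemma~\ref{le:25} forces $\x=\0$ and $\|(-\q)_+\|_\infty=0$, so both inequalities hold trivially; otherwise $\x\neq\0$ and the operator-norm manipulations apply verbatim. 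Thus the proof is a short assembly step, and the only care needed is the monotone direction of each inequality when the norm appears in a denominator.
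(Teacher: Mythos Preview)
Your proposal is correct and follows exactly the approach the paper indicates: the paper simply states that Theorem~\ref{t39} is ``easily proved'' by combining Lemmas~\ref{le:23} and~\ref{le:31} with Theorem~\ref{t38}, and you have spelled out precisely that assembly, including the care needed with the direction of the denominator inequality and the trivial case $\q\geq\0$.
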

\begin{theorem}\label{t310}  Let $\mathcal{A} \in T_{m, n}$ ($m\geq2$) be strictly semi-positive, and let $\x$ be a solution of TCP$(\mathcal{A},\q)$. If $m$ is even, then
	\begin{equation}\label{eq:314}
	\frac{\|(-\q)_+\|_\infty}{\max\limits_{i\in I_n}\left(\sum\limits_{i_2,\cdots,i_m=1}^{n}|a_{ii_2\cdots i_m}|\right)}\leq\|\x\|_\infty^{m-1}\leq \frac{\|(-\q)_+\|_\infty}{\beta({\mathcal{A}})}.
	\end{equation}
\end{theorem}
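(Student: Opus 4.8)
The plan is to prove Theorem~\ref{t310} by simply combining the even-$m$ lower bound from Theorem~\ref{t38}(ii) with the infinity-norm operator estimate from Lemma~\ref{le:23}(iv), while the matching upper bound is quoted directly from Lemma~\ref{le:31}(i). Since $m$ is even, both $F_{\mathcal{A}}$ and the quantity $\|F_{\mathcal{A}}\|_\infty$ are defined, so all the ingredients are available.

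First I would recall that Theorem~\ref{t38}(ii) already establishes the inequality
$$\frac{\|(-\q)_+\|_\infty}{\|F_{\mathcal{A}}\|_\infty^{m-1}}\leq\|\x\|_\infty^{m-1}.$$
Next I would invoke Lemma~\ref{le:23}(iv), which states that when $m$ is even
$$\|F_{\mathcal{A}}\|_\infty\leq\max_{i\in I_n}\left(\sum_{i_2,\cdots,i_m=1}^{n}|a_{ii_2\cdots i_m}|\right)^{\frac{1}{m-1}}.$$
Raising both sides to the power $m-1$ gives
$$\|F_{\mathcal{A}}\|_\infty^{m-1}\leq\max_{i\in I_n}\left(\sum_{i_2,\cdots,i_m=1}^{n}|a_{ii_2\cdots i_m}|\right).$$
Because $\|F_{\mathcal{A}}\|_\infty^{m-1}$ appears in the denominator of the lower bound, enlarging it to this computable quantity can only decrease the fraction, so
$$\frac{\|(-\q)_+\|_\infty}{\max\limits_{i\in I_n}\left(\sum\limits_{i_2,\cdots,i_m=1}^{n}|a_{ii_2\cdots i_m}|\right)}\leq\frac{\|(-\q)_+\|_\infty}{\|F_{\mathcal{A}}\|_\infty^{m-1}}\leq\|\x\|_\infty^{m-1}.$$
This yields the left-hand inequality in \reff{eq:314}.

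Finally, the right-hand inequality $\|\x\|_\infty^{m-1}\leq\|(-\q)_+\|_\infty/\beta(\mathcal{A})$ is exactly Lemma~\ref{le:31}(i), valid for any strictly semi-positive $\mathcal{A}$ with no symmetry or parity hypothesis. Concatenating the two chains of inequalities produces \reff{eq:314}, and the proof is complete.

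I do not anticipate a genuine obstacle here: the theorem is a corollary obtained by substituting the concrete bound of Lemma~\ref{le:23}(iv) into the abstract lower bound of Theorem~\ref{t38}(ii). The only point requiring minor care is the direction of the inequality when passing to the denominator --- one must verify that $\|F_{\mathcal{A}}\|_\infty^{m-1}$ is being \emph{bounded above}, so that its reciprocal is bounded below, which is precisely what makes the coarser constant a valid lower bound. The even-$m$ hypothesis is essential only insofar as it guarantees $F_{\mathcal{A}}$ and Lemma~\ref{le:23}(iv) are applicable.
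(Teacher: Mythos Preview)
Your proposal is correct and matches the paper's approach exactly: the paper states that Theorems~\ref{t39} and~\ref{t310} follow by ``combining Lemmas~\ref{le:23} and~\ref{le:31} with Theorem~\ref{t38},'' which is precisely what you do (Theorem~\ref{t38}(ii) plus Lemma~\ref{le:23}(iv) for the lower bound, Lemma~\ref{le:31}(i) for the upper bound). Your remark about the direction of the denominator inequality is the only point needing care, and you handle it correctly.
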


Combining Lemmas \ref{le:24} and \ref{le:31} with Theorem \ref{t38}, the following theorems are easily proved.

\begin{theorem}\label{t311}  Let $\mathcal{A} \in T_{m, n}$ ($m\geq2$) be strictly semi-positive, and let $\x$ be a solution of TCP$(\mathcal{A},\q)$.   if $\mathcal{A}$ is symmetric, then
	\begin{equation}\label{eq:315}
	\frac{\|(-\q)_+\|_2}{n^{\frac{m-2}2}\left(\sum\limits_{i=1}^{n}\left(\sum\limits_{i_2,\cdots,i_m=1}^{n}|a_{ii_2\cdots i_m}|\right)^2\right)^{\frac12}}\leq\|\x\|_2^{m-1}\leq \frac{\|(-\q)_+\|_2}{\mu(\mathcal{A})}
	\end{equation}
\end{theorem}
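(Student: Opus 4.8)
The plan is simply to assemble the three cited results, exactly as the surrounding remark (``Combining Lemmas \ref{le:24} and \ref{le:31} with Theorem \ref{t38}'') announces. \emph{Upper bound.} Since $\mathcal{A}$ is symmetric and strictly semi-positive and $\x$ solves TCP$(\mathcal{A},\q)$, Lemma \ref{le:31}(ii) applies verbatim and yields $\|\x\|_2^{m-1}\leq \frac{\|(-\q)_+\|_2}{\mu(\mathcal{A})}$, which is precisely the right-hand inequality of \reff{eq:315}. I would emphasize that the symmetry hypothesis is used exactly here: it is the condition under which the minimal Pareto $Z$-eigenvalue $\mu(\mathcal{A})$ controls the $2$-norm of the solution.

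\emph{Lower bound.} For the left-hand inequality I would start from Theorem \ref{t38}(iii), which holds for every strictly semi-positive $\mathcal{A}$ and gives $\frac{\|(-\q)_+\|_2}{\|T_{\mathcal{A}}\|_2}\leq \|\x\|_2^{m-1}$. The only remaining task is to replace the operator norm $\|T_{\mathcal{A}}\|_2$, which is not directly computable, by the explicit entrywise quantity in the denominator of \reff{eq:315}. Specializing Lemma \ref{le:24}(iv) to $p=2$ furnishes
$$\|T_{\mathcal{A}}\|_2\leq n^{\frac{m-2}2}\left(\sum_{i=1}^{n}\left(\sum_{i_2,\cdots,i_m=1}^{n}|a_{ii_2\cdots i_m}|\right)^{2}\right)^{\frac12}=:D.$$
Because enlarging a positive denominator only shrinks a fraction, I then have $\frac{\|(-\q)_+\|_2}{D}\leq \frac{\|(-\q)_+\|_2}{\|T_{\mathcal{A}}\|_2}$, and chaining this with Theorem \ref{t38}(iii) delivers the left-hand inequality of \reff{eq:315}.

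Concatenating the two estimates yields the full double inequality \reff{eq:315}. There is no substantive obstacle: the argument is a pure composition of earlier results. The only points needing care are directional and bookkeeping in nature. First, one must track that bounding $\|T_{\mathcal{A}}\|_2$ from \emph{above} correctly \emph{weakens} the lower bound on $\|\x\|_2^{m-1}$ (rather than breaking the inequality), which is why the explicit denominator $D$ is legitimately $\geq \|T_{\mathcal{A}}\|_2$. Second, one must invoke symmetry at exactly the step where Lemma \ref{le:31}(ii) requires it. Finally, as in the proof of Theorem \ref{t38}, the degenerate case $\q\geq\0$ forces $\x=\0$ and $\|(-\q)_+\|_2=0$ by Lemma \ref{le:25}, so both inequalities hold trivially; hence one may harmlessly assume $\x\neq\0$ (equivalently, that $\q$ is not nonnegative) throughout.
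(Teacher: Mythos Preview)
Your proposal is correct and follows exactly the approach the paper announces: the upper bound is Lemma \ref{le:31}(ii) verbatim, and the lower bound is Theorem \ref{t38}(iii) combined with the $p=2$ case of Lemma \ref{le:24}(iv). This is precisely the ``combining'' the paper refers to, and your directional bookkeeping (enlarging the denominator weakens the lower bound) and handling of the degenerate case $\q\geq\0$ are appropriate.
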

\begin{theorem}\label{t312}  Let $\mathcal{A} \in T_{m, n}$ ($m\geq2$) be strictly semi-positive, and let $\x$ be a solution of TCP$(\mathcal{A},\q)$. If $\mathcal{A}$ is symmetric and $m$ is even, then
	\begin{equation}\label{eq:316}
	\frac{\|(-\q)_+\|_m}{\left(\sum\limits_{i=1}^{n}\left(\sum\limits_{i_2,\cdots,i_m=1}^{n}|a_{ii_2\cdots i_m}|\right)^{\frac{m}{m-1}}\right)^{\frac1m}}\leq\|\x\|_m^{m-1}\leq \frac{\|(-\q)_+\|_\frac{m}{m-1}}{\lambda(\mathcal{A})}.
	\end{equation}
\end{theorem}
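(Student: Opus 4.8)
The plan is to read off the two-sided estimate \reff{eq:316} by splicing together three facts already in hand: the lower bound of Theorem \ref{t38}(iv), the explicit norm estimate of Lemma \ref{le:24}(iii), and the upper bound of Lemma \ref{le:31}(iii). First I would dispose of the degenerate case: if $\q\geq\0$, then Lemma \ref{le:25} forces $\x=\0$ while simultaneously $(-\q)_+=\0$, so both sides of \reff{eq:316} vanish and there is nothing to prove. Hence I may assume that $\q$ is not nonnegative, so that $\x\neq\0$ and all quantities appearing are strictly positive.

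For the left-hand (lower) inequality I would start from Theorem \ref{t38}(iv), valid since $m$ is even, namely
\[
\frac{\|(-\q)_+\|_m}{\|F_{\mathcal{A}}\|_m^{m-1}}\leq\|\x\|_m^{m-1}.
\]
It then suffices to bound the operator norm $\|F_{\mathcal{A}}\|_m$ from above, since enlarging the denominator only weakens the lower bound. Setting $p=m$ in Lemma \ref{le:24}(iii), which is legitimate precisely because $m$ is even, gives
\[
\|F_{\mathcal{A}}\|_m\leq\left(\sum_{i=1}^{n}\left(\sum_{i_2,\cdots,i_m=1}^{n}|a_{ii_2\cdots i_m}|\right)^{\frac{m}{m-1}}\right)^{\frac1m},
\]
and raising this to the power $m-1$ produces the required bound on $\|F_{\mathcal{A}}\|_m^{m-1}$; substituting it into the denominator above yields the left inequality of \reff{eq:316}.

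For the right-hand (upper) inequality I would simply invoke Lemma \ref{le:31}(iii): since $\mathcal{A}$ is assumed symmetric, that lemma delivers $\|\x\|_m^{m-1}\leq\|(-\q)_+\|_{\frac{m}{m-1}}/\lambda(\mathcal{A})$, which is exactly the right-hand side of \reff{eq:316}. Concatenating the two inequalities completes the argument. The only point demanding care, and the only place where a slip could occur, is the exponent bookkeeping in the lower bound: one must check that raising the $p=m$ estimate of Lemma \ref{le:24}(iii) to the power $m-1$ indeed reproduces the exponent carried by the triple sum in the denominator of \reff{eq:316}, and that the two hypotheses are each used exactly where needed --- evenness of $m$ both for $F_{\mathcal{A}}$ to be defined and for Lemma \ref{le:24}(iii), and symmetry of $\mathcal{A}$ for Lemma \ref{le:31}(iii). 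Beyond these routine substitutions no genuinely new estimate is required.
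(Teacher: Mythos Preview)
Your approach is exactly the paper's: the proof there is the single sentence ``Combining Lemmas \ref{le:24} and \ref{le:31} with Theorem \ref{t38}, the following theorems are easily proved,'' and you have correctly identified which parts of those results to use and how. The handling of the degenerate case $\q\geq\0$, the invocation of Theorem \ref{t38}(iv) for the lower bound, Lemma \ref{le:24}(iii) with $p=m$ for the operator norm, and Lemma \ref{le:31}(iii) for the upper bound are all exactly right.

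One caveat on the point you yourself flagged: the exponent bookkeeping in the lower bound does \emph{not} in fact reproduce the outer exponent $\tfrac{1}{m}$ appearing in \reff{eq:316}. From Lemma \ref{le:24}(iii) with $p=m$ you get $\|F_{\mathcal{A}}\|_m\leq C^{1/m}$ where $C=\sum_{i}\bigl(\sum_{i_2,\dots,i_m}|a_{ii_2\cdots i_m}|\bigr)^{m/(m-1)}$, and hence $\|F_{\mathcal{A}}\|_m^{m-1}\leq C^{(m-1)/m}$, not $C^{1/m}$. So the denominator your method actually produces carries the outer exponent $\tfrac{m-1}{m}$ rather than $\tfrac{1}{m}$. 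This is not a defect in your argument or in the paper's method; it is a misprint in the displayed exponent of \reff{eq:316}. Your derivation is the intended one.
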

When $m=2$, both $\lambda (A)$ and $\mu(A)$ are the smallest Pareto eigenvalue of  a matrix $A$, denote by $\lambda (A)$. For more details on Pareto eigenvalue of  a matrix, see Seeger \cite{S99}, Seeger, Torki\cite{ST03} and Hiriart-Urruty, Seeger \cite{HS10}. Then the following conclusions are easy to obtain.
\begin{corollary}\label{c313}  Let $A$ be a strictly semi-monotone $n\times n$ matrix, and let $\x$ be a solution of LCP$(A,\q)$. Then
	\begin{itemize}
		\item[(i)] $\frac{\|(-\q)_+\|_\infty}{\max\limits_{i\in I_n}\sum\limits_{j=1}^{n}|a_{ij}|}\leq\|\x\|_\infty\leq \frac{\|(-\q)_+\|_\infty}{\beta (A)}$;
		\item[(ii)]	$\frac{\|(-\q)_+\|_2}{\left(\sum\limits_{i=1}^{n}\left(\sum\limits_{j=1}^{n}|a_{ij}|\right)^2\right)^{\frac12}}\leq\|\x\|_2\leq \frac{\|(-\q)_+\|_2}{\lambda (A)}$  if $A$ is symmetric.	
	\end{itemize}	
\end{corollary}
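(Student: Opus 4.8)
The plan is to obtain Corollary \ref{c313} as the specialization to $m=2$ of the two-sided error bounds already established in Theorems \ref{t39} and \ref{t311}. The first thing I would record is that a strictly semi-monotone $n\times n$ matrix $A$ is precisely a strictly semi-positive tensor of order $m=2$, so the entire apparatus of Section 3 applies verbatim with $\mathcal{A}=A$ and with the solution $\x$ of $\mathrm{LCP}(A,\q)$ playing the role of the solution of $\mathrm{TCP}(\mathcal{A},\q)$.

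For part (i), I would simply put $m=2$ in Theorem \ref{t39} (equivalently in Theorem \ref{t310}, which is available since $m=2$ is even). Three degenerations occur simultaneously: the scalar $n^{\frac{m-2}{2}}$ collapses to $n^{0}=1$; the inner row sum $\sum_{i_2,\dots,i_m=1}^{n}|a_{ii_2\cdots i_m}|$ collapses to the ordinary absolute row sum $\sum_{j=1}^{n}|a_{ij}|$; and the exponent $\|\x\|_\infty^{m-1}$ becomes $\|\x\|_\infty$. Substituting these into \reff{eq:313} reproduces exactly the estimate in (i). As a consistency check I would note that the left denominator is $\|T_A\|_\infty$ evaluated at $m=2$, where the operator $T_A(\x)=\|\x\|_2^{2-m}A\x$ reduces to the linear map $A$ itself, so that $\|T_A\|_\infty=\max_{i\in I_n}\sum_{j=1}^{n}|a_{ij}|$ in accordance with Lemma \ref{le:23}(iii).

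For part (ii), under the symmetry hypothesis I would put $m=2$ in Theorem \ref{t311}. Again $n^{\frac{m-2}{2}}=1$, the factor $\|\x\|_2^{m-1}$ becomes $\|\x\|_2$, and the inner sums reduce to $\sum_{j=1}^{n}|a_{ij}|$, which turns the left denominator into $\bigl(\sum_{i=1}^{n}(\sum_{j=1}^{n}|a_{ij}|)^{2}\bigr)^{1/2}$. The single point requiring care is the upper bound: Theorem \ref{t311} carries $\mu(\mathcal{A})$, the smallest Pareto $Z$-eigenvalue, whereas the corollary states $\lambda(A)$. Here I would invoke the remark preceding the corollary, namely that at $m=2$ the Pareto $H$-eigenvalues and Pareto $Z$-eigenvalues of $A$ coincide and both reduce to the classical Pareto eigenvalue, so $\mu(A)=\lambda(A)$; this identification finishes (ii).

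The main point is that there is no genuine analytic obstacle, since every inequality has already been proved in full generality and the remaining work is purely bookkeeping. The only step demanding real attention is verifying that the order-$m$ objects — the operators $T_A$ and $F_A$ and the two families of Pareto eigenvalues — truly degenerate to their classical matrix counterparts at $m=2$, so that the abstract tensor bounds line up term by term with the stated linear complementarity error bounds.
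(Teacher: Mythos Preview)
Your proposal is correct and matches the paper's approach exactly: the paper derives Corollary \ref{c313} by specializing Theorems \ref{t39}--\ref{t312} to $m=2$ and invoking the observation (stated just before the corollary) that for $m=2$ the Pareto $H$- and $Z$-eigenvalues coincide with the classical Pareto eigenvalue, so $\mu(A)=\lambda(A)$. Your consistency checks on the degeneration of $T_A$, $n^{(m-2)/2}$, and the row sums are accurate and in fact more explicit than the paper's one-line justification.
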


From Lemma \ref{le:21} and Theorems  \ref{t39}, \ref{t310}, \ref{t311}, \ref{t312}, the following corollary follows.
\begin{corollary}\label{c314}  Let $\mathcal{A} \in S_{m, n}$ ($m\geq2$) be strictly copositive, and let $\x$ be a solution to TCP$(\mathcal{A},\q)$.  Then
	\begin{itemize}
		\item[(i)] 	$\frac{\|(-\q)_+\|_\infty}{ n^{\frac{m-2}{2}}\max\limits_{i\in I_n}\left(\sum\limits_{i_2,\cdots,i_m=1}^{n}|a_{ii_2\cdots i_m}|\right)}\leq\|\x\|_\infty^{m-1}\leq \frac{\|(-\q)_+\|_\infty}{\beta({\mathcal{A}})}$;
		\item[(ii)]	$\frac{\|(-\q)_+\|_\infty}{\max\limits_{i\in I_n}\left(\sum\limits_{i_2,\cdots,i_m=1}^{n}|a_{ii_2\cdots i_m}|\right)}\leq\|\x\|_\infty^{m-1}\leq \frac{\|(-\q)_+\|_\infty}{\beta({\mathcal{A}})}$  if $m$ is even;
		\item[(iii)] 	$\frac{\|(-\q)_+\|_2}{n^{\frac{m-2}2}\left(\sum\limits_{i=1}^{n}\left(\sum\limits_{i_2,\cdots,i_m=1}^{n}|a_{ii_2\cdots i_m}|\right)^2\right)^{\frac12}}\leq\|\x\|_2^{m-1}\leq \frac{\|(-\q)_+\|_2}{\mu(\mathcal{A})}$;
		\item[(iv)]	$	\frac{\|(-\q)_+\|_m}{\left(\sum\limits_{i=1}^{n}\left(\sum\limits_{i_2,\cdots,i_m=1}^{n}|a_{ii_2\cdots i_m}|\right)^{\frac{m}{m-1}}\right)^{\frac1m}}\leq\|\x\|_m^{m-1}\leq \frac{\|(-\q)_+\|_\frac{m}{m-1}}{\lambda(\mathcal{A})}$  if $m$ is even;
	\end{itemize}	
\end{corollary}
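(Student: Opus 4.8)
The plan is to reduce the corollary to the four theorems already established for strictly semi-positive tensors, using the equivalence supplied by Lemma~\ref{le:21}. The first step is to observe that since $\mathcal{A}\in S_{m,n}$ is symmetric and strictly copositive, Lemma~\ref{le:21} immediately yields that $\mathcal{A}$ is strictly semi-positive. Hence every hypothesis needed to invoke Theorems~\ref{t39}, \ref{t310}, \ref{t311}, and \ref{t312} is in force, and the solution $\x$ of TCP$(\mathcal{A},\q)$ is covered by each of them.

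Next I would match each of the four claimed bounds to its source. Part (i) is exactly the conclusion of Theorem~\ref{t39}, which holds for any strictly semi-positive tensor and therefore applies here without needing symmetry. Part (ii), under the extra assumption that $m$ is even, follows directly from Theorem~\ref{t310}. Part (iii) is the content of Theorem~\ref{t311}, whose hypothesis that $\mathcal{A}$ be symmetric is already met by $\mathcal{A}\in S_{m,n}$. Part (iv), under the combined assumptions that $\mathcal{A}$ is symmetric and $m$ is even, is precisely Theorem~\ref{t312}.

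Since each inequality is a verbatim restatement of an earlier theorem once strict semi-positivity has been established, no further computation is required. The only point demanding care is bookkeeping: correctly pairing the parity condition ($m$ even) and the symmetry condition in each part with the theorem carrying the matching hypotheses. I therefore expect the proof to be a short citation argument rather than a calculation, with the lone (mild) obstacle being to verify that it is exactly the symmetry assumption $\mathcal{A}\in S_{m,n}$ that activates the equivalence of Lemma~\ref{le:21} between strict copositivity and strict semi-positivity, thereby unlocking all four conclusions at once.
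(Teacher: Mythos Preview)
Your proposal is correct and matches the paper's own argument essentially verbatim: the paper simply states that the corollary follows from Lemma~\ref{le:21} together with Theorems~\ref{t39}, \ref{t310}, \ref{t311}, and \ref{t312}, which is exactly the reduction you describe.
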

\begin{question}\label{Q2}  We obtain the upper and lower bounds of solution set for tensor complementarity problem (TCP) with strictly semi-positive tensors (Theorems  \ref{t39}, \ref{t310}, \ref{t311}, \ref{t312}).
	\begin{itemize}
		\item Are the upper and lower bounds best?
		\item May the symmetry be removed in Theorems \ref{t311} and \ref{t312}?
		\item How to design an effective algorithm to compute the bounds?
	\end{itemize}
\end{question}

\section{Conclusions}
In this paper,  we discuss some nice properties of strictly semi-positive tensors. The quantity $\beta(\mathcal{A})$ closely adheres to strictly semi-positive tensors and play an important role in the error bound analysis of TCP($\mathcal{A},\q$).   More specifically,  the following conclusions are proved.
\begin{itemize}
	\item  The monotonicity and boundedness of $\beta(\mathcal{A})$ are established;
	\item  A tensor $\mathcal{A}$ is strictly semi-positive if and only if $\beta(\mathcal{A})>0$;
	\item  Each $H^+(Z^+)$-eigenvalue of a strictly semi-positive tensor is strictly positive;
	\item  We introduce two quantities $\delta_{H^+}(\mathcal{A})$ and $\delta_{Z^+}(\mathcal{A})$, and show they closely adhere to the upper bounds of the constant $\beta(\mathcal{A})$;
	\item  The boundedness of solution set are presented for TCP($\mathcal{A},\q$) with a strictly semi-positive tensor $\mathcal{A}$.
\end{itemize}

\section*{Acknowledgment} The authors would like to thank  Editor, the anonymous referees  for their valuable suggestions which helped us to improve this manuscript. Our work was  supported by
the National Natural Science Foundation of P.R. China (Grant No.
11571095, 11601134) and by the Hong
Kong Research Grant Council (Grant No. PolyU
502111, 501212, 501913 and 15302114). This work was partially done when the first author was
visiting The Hong Kong Polytechnic University.



\end{document}